\documentclass{article}[10pt]

\usepackage{amsmath}%
\usepackage{amssymb}%

\usepackage{geometry}

\geometry{a4paper,
left=30mm, right=30mm,
}

\newtheorem{theorem}{Theorem}[section]
\newtheorem{corollary}[theorem]{Corollary}
\newtheorem{definition}[theorem]{Definition}

\newtheorem{lemma}[theorem]{Lemma}
\newtheorem{proposition}[theorem]{Proposition}
\newtheorem{remark}[theorem]{Remark}
\newenvironment{proof}[1][Proof]{\textbf{#1.} }{\ \rule{0.5em}{0.5em}}

\newcommand{\refeqn}[1]{(\ref{#1})}

\newcommand{\cinf}[0]{C^{\infty}}
\newcommand{\rank}[0]{\operatorname{rank}}

\begin{document}

\title{A note on symmetries of diffusions within a martingale problem approach}
\author{
Francesco C. De Vecchi\thanks{Dip. di Matematica, Universit\`a degli Studi di Milano, via Saldini 50, Milano, \emph{email: francesco.devecchi@unimi.it}} , Paola Morando\thanks{DISAA, Universit\`a degli Studi di Milano, via Celoria 2, Milano, \emph{email: paola.morando@unimi.it}}  and Stefania Ugolini\thanks{Dip. di Matematica, Universit\`a degli Studi di Milano, via Saldini 50, Milano, \emph{email: stefania.ugolini@unimi.it}}
}
\date{}
\maketitle

\begin{abstract}
A geometric reformulation of the martingale problem
associated with a set of diffusion processes is proposed. This formulation,
based on second order geometry and  It\^o integration on
manifolds,
allows us   to give a
natural and effective definition of  Lie symmetries for diffusion
processes.
\end{abstract}

\section{Introduction}

The theory of infinitesimal symmetries of ordinary and partial differential equations (ODEs and PDEs respectively)  is a classical research
topic in applied mathematics, providing powerful tools both for investigating the qualitative behaviour of  differential equations and for
obtaining  some explicit expression for their solutions (see, e.g., \cite{Olver1993,Stephani1989}). The theory of symmetries of stochastic
differential equations (SDEs) is, in comparison, less developed. There are two main approaches to this problem in the case of
Brownian-motion-driven SDEs. The first approach, based on the Markovian property of solutions to a SDE, looks for the classical Lie symmetries
of the Markov generator, which is an analytical  object (see \cite{DeLara1995,Glover1990,Liao1992}). The second method, directly inspired by Lie
ideas, consists in seeking for some semimartingale transformations   leaving invariant the set of   solutions to the considered SDE (see,
e.g. \cite{Fredericks2007,Gaeta1999,Kozlov2010,Zambrini2004,Srihirun2007,Unal2004}). Both approaches have their own strong and weak points: for
example, the first method permits to treat a larger class of transformations and processes, while  the second one results more convenient in
order
to generalize the deterministic notions of reduction and reconstruction by quadratures (see \cite{DMU2,Cami2009}).\\
In \cite{DMU1} we propose a partial reconciliation of these two programs: in fact, despite working in the second method perspective,  we introduce
a larger class of SDEs transformations which permits both to include all the transformations of the first approach and to obtain all the applications
of the second one.\\
In this article we make a synthesis of the above two approaches from a new prospective. In particular, starting from  the martingale problem
characterization of the
solutions to a SDE, typical of the Markovian setting, we introduce, in the stochastic framework, a geometric formulation of the  symmetry problem. \\
The main idea is to generalize the  well known identification of an ODE on a manifold $M$ with a one-dimensional module $K$ on the tangent bundle of the
zero-jet
space $N=J^0(\mathbb{R},M)=\mathbb{R} \times M$ (or, equivalently,  a
module $K'$ of codimension one on the cotangent bundle $T^*N$). Thanks to this correspondence, the symmetries of an ODE can be identified
 as the family
of diffeomorphisms $\overline{\Phi}:N
\rightarrow N$ transforming the module $K$ (or, equivalently, the module $K'$) into itself. \\
In order to generalize the previous (deterministic) geometric approach to the stochastic framework we need two main ingredients (both introduced by
P.A. Meyer and L. Schwartz
\cite{Meyer1981,Schwartz1982} and thereafter studied by Emery \cite{Emery1989}): second order geometry and It\^o integration on manifolds.
In particular, second order geometry allows us
to introduce diffusors (a generalization of vector fields) and  codiffusors (a generalization of differential forms), while
It\^o integration on  manifolds permits to integrate any codiffusor $\lambda$ along a semimartingale $X$ defined on $M$.\\
In this framework  the usual martingale problem associated with a second order operator $L$ can be reformulated in the following way: a
semimartingale $X$ is a solution to the martingale problem associated with $L$ if and only if, for any $\lambda \in \Lambda_L$, the integral of
$\lambda$ with respect to $X$ is a local martingale, where $\Lambda_L$ is the module of codiffusors annihilating $L$ (see Section
\ref{section_martingale}). In this way we reinterpret the  martingale
problem in terms of a natural geometric object: the module of codiffusors $\Lambda_L$.\\
Therefore,  we prove that a diffeomorphism $\overline{\Phi}$ is a symmetry for the martingale problem associated with $L$, which means
that $\overline{\Phi}$ transforms solutions to the martingale problem into other solutions to the same martingale problem, if and only if the natural
action of the pull-back $\overline{\Phi}^*$ transforms $\Lambda_L$ into itself. The last condition is purely geometrical and  permits to explicitly
compute
the set of symmetries of the martingale problem associated with $L$ (see Section \ref{section_symmetries}).

The paper is organized as follows: in Section \ref{section_preliminaries} we briefly introduce  second order geometry and It\^o integration on
manifolds and in Section \ref{section_transformations} we study the behaviour of the geometric and probabilistic objects introduced in the
previous section with respect to spatial diffeomorphisms and deterministic time changes. With this background, in Section \ref{section_martingale}, we
propose a geometric reformulation of the martingale problem for diffusion processes and in Section \ref{section_symmetries} we exploit it  in order to provide a suitable notion of symmetry and to explicitly compute  the corresponding  determining equations. Finally we
compare the class of Lie symmetries  arising from our approach  with other ones appearing in the literature.

\section{Preliminaries: second order geometry and It\^o integration on manifold}\label{section_preliminaries}

In this section, also in order to fix notations, we briefly recall some basic facts about second order geometry and  It\^o integration on
manifolds. The interested reader is referred to  \cite{Emery1989,Meyer1981,Schwartz1982} for proofs and further details.

\subsection{Second order geometry}

Given a smooth manifold $M$, we denote by $ \cinf (M) $ the set of
real-valued smooth functions defined on $ M $. If $ F $ is a
bundle with base manifold  $ B $, we denote by $ S(F) $ the set of smooth
sections of $ F $.
Finally, if $ M '$ is a manifold and $ n \in \mathbb{N} $, we
denote by $ J^n(M, M') $ the bundle of $ n $ jets of $ n $ times
differentiable functions defined on $ M $ and taking values in $ M
'$.

Let  $ M $ be a  smooth manifold and $u$ be a global coordinate defined on
$\mathbb{R}$. The subset $ u^{- 1} (0) \subset J ^ 2
(M, \mathbb {R}) $ is a submanifold of $ J^2 (M, \mathbb{R})$ and actually a vector subbundle of $ J^2 (M, \mathbb{R})$.

\begin{definition}
The submanifold $ u^{- 1} (0) \subset J ^ 2 (M, \mathbb {R}) $ is
called the \emph{bundle of codiffusors} of the manifold $ M $ and
is denoted by $ \tau^* M $.
\end{definition}

Given a  coordinate system $ x ^ i$   on $ M $, let  $(x ^ i, u, u_ {x^i}, u_
{x ^ ix ^ j}) $ be the associated coordinate system  on $ J ^ 2
(M, \mathbb {R}) $.  Denoting by $ \pi^2: J ^ 2 (M, \mathbb {R}) \rightarrow M $ the
projection of $ J ^ 2 (M, \mathbb{R}) $ into $ M $, we define the
 smooth function $\Pi:J^2(M,\mathbb{R}) \rightarrow \tau^*M $ as
\begin{equation}
\Pi (x ^ i, u, u_{ x^ {i}}, u_{x ^ ix ^ j}) = (x ^ i, 0, u_ {x ^ i}, u_ {x ^ ix ^ j} ).
\end{equation}
The projection $ \pi ^ 2 | _ {\tau ^ * M} $ makes the submanifold
$ \tau ^ * M $ a vector subbundle of $ J ^ 2 (M, \mathbb {R}) $
with base $ M $. The function $ \Pi $ is well defined and is a
morphism  of vector bundles on $ M $.\\
From now on we call \emph{codiffusor} on $M$ a smooth section of the vector bundle $
\tau ^ * M $.\\
Given a smooth function $ f \in \cinf(M)$, let $ D ^ 2f$  denote the
natural lift of $ f $ to $ J ^ 2 (M, \mathbb {R}) $ given in
coordinates  by
\begin{equation}
D^2f(x)=(x^i,f(x),\partial_{x^i}f(x),\partial_{x^ix^j}f(x)).
\end{equation}
\begin{definition}\label{definition_differential}
We call \emph{second differential} of $f \in \cinf(M)$ the codiffusor
$$d^2f:=\Pi(D^2f).$$
Moreover, for $f,g \in \cinf(M)$, we denote by  $df \cdot dg$ the codiffusor
$$df \cdot dg \ := \ \frac{1}{2}(d^2(fg)-gd^2f-fd^2g).$$
\end{definition}

\noindent Since, from the previous definition, we have
\begin{eqnarray*}
d^2x^i&=&(x^i,0,u_{x^j}=\delta^i_j,u_{x^ix^j}=0) \  \\
dx^i \cdot dx^j&=&\left(x^i,0,u_{x^l}=0,u_{x^kx^l}=\frac{1}{2}(\delta_{ki}\delta_{lj}+\delta_{kj}\delta_{li})\right),
\end{eqnarray*}
we can give an explicit coordinate expression for $d^2f$:
\begin{equation}
 d ^ 2f = \partial_{x ^ i}fd ^ 2x ^ i + \partial_{x ^ ix ^ j} fdx ^ i \cdot dx ^ j.
\end{equation}

\begin{remark}\label{remark_codiffusor2}
If $ \lambda $ is a codiffusor on $ M $ and $ x ^ i $ is a
coordinate system on $ M $, there exist unique  functions $
\lambda_i, \lambda_ {ij} = \lambda_ {ji} \in \cinf(M)$ such that
locally
\begin{equation}
\lambda=\lambda_i d^2x^i+\lambda_{ij} dx^i \cdot dx^j.
\end{equation}
\end{remark}

The following theorem provides a useful  characterization of codiffusors on $M$.
\begin{theorem}\label{theorem_codiffusor1}
For any codiffusor $\lambda$ on $M$ there exist $g_i,f_i \in \cinf(M)$, $i=1,...,m$, such that
$\lambda=\sum_{i=1}^{m}g_id^2f_i$.
\end{theorem}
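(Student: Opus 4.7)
The plan is to use Whitney's embedding theorem to exhibit finitely many global smooth functions whose second differentials and pairwise products span $\tau^*M$ fiberwise, and then to use the identity in Definition~\ref{definition_differential} to absorb the symmetric products into second differentials.

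First I would note that, by the identity
$$df \cdot dg = \tfrac{1}{2}\bigl(d^2(fg) - g\, d^2 f - f\, d^2 g\bigr),$$
every symmetric product $df \cdot dg$ is already a $\cinf(M)$-linear combination of three terms of the form $h\, d^2 k$. It therefore suffices to find $y^1,\dots,y^N \in \cinf(M)$ such that the sections $d^2 y^i$ and $dy^i \cdot dy^j$ generate $S(\tau^* M)$ as a $\cinf(M)$-module. For the $y^i$ I would take the components of a Whitney embedding $M \hookrightarrow \mathbb{R}^N$. The key pointwise claim is then that these sections span $\tau^*_p M$ at every $p \in M$: choosing local coordinates $x^k$ at $p$, injectivity of the embedding's differential forces $(\partial_{x^k} y^i)$ to have rank $n = \dim M$, so the covectors $dy^i|_p$ span $T^*_p M$; consequently the $dy^i \cdot dy^j|_p$ span $\mathrm{Sym}^2 T^*_p M$ and contain every $dx^k \cdot dx^l|_p$; and the coordinate formula $d^2 y^i = \partial_{x^k} y^i\, d^2 x^k + \partial_{x^k x^l} y^i\, dx^k \cdot dx^l$ then expresses each $d^2 x^k|_p$ in the span via a left inverse of the Jacobian. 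By Remark~\ref{remark_codiffusor2} this gives pointwise spanning.

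The last step is to upgrade pointwise spanning to generation of $S(\tau^* M)$ as a module: the fiberwise surjective bundle morphism $M \times \mathbb{R}^{N + N(N+1)/2} \to \tau^* M$ built from these sections admits a smooth right inverse, obtained by fixing a bundle metric on the trivial bundle and splitting off the orthogonal complement of its kernel. Applying this right inverse to $\lambda$ produces global $g_i, h_{ij} \in \cinf(M)$ with $\lambda = \sum_i g_i\, d^2 y^i + \sum_{i \le j} h_{ij}\, dy^i \cdot dy^j$, after which the first identity converts this into the required finite sum $\sum_k g_k\, d^2 f_k$. I expect the main obstacle to be the pointwise spanning claim, where one must confirm that the second-order information carried by the embedding is rich enough to recover all of $\tau^*_p M$ and not merely the covectors. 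The smooth-splitting step is then routine but is precisely what forces the representation to be genuinely finite, avoiding the merely locally finite expression one would obtain from a bare partition-of-unity argument.
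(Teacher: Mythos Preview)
The paper does not supply its own proof of this theorem: it is stated without argument in the preliminaries section, with the reader referred to Emery~\cite{Emery1989}, Meyer~\cite{Meyer1981} and Schwartz~\cite{Schwartz1982} for details. Your proposal is therefore not being measured against a proof in the text, but it is correct and in fact reproduces the standard argument found in those references (Emery in particular): embed $M$ in $\mathbb{R}^N$ via Whitney, observe that the coordinate functions of the embedding generate $\tau^*M$ fiberwise, and then use the identity of Definition~\ref{definition_differential} to rewrite the symmetric products as $\cinf(M)$-combinations of second differentials.

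One small remark on presentation: the step you flag as the ``main obstacle''---that the second-order information of the embedding recovers all of $\tau^*_pM$---is exactly the computation you already carried out, since the left inverse of the Jacobian expresses each $d^2x^k$ in the span of the $d^2y^i$ modulo terms in $\mathrm{Sym}^2T^*_pM$, which you have already shown are spanned by the $dy^i\cdot dy^j$. So there is no residual difficulty there. The smooth-splitting step is also fine as stated: fiberwise surjectivity onto a bundle of constant rank forces the kernel to have constant rank, hence to be a subbundle, and the orthogonal splitting goes through.
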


Using Theorem \ref{theorem_codiffusor1} it is possible to extend the product between the differentials of functions given in Definition
\ref{definition_differential} to a product defined for any couple of differential one forms. In a coordinate system $x^i$ the product has
the following representation
$$\omega \cdot \gamma =\omega_i \gamma_j dx^i \cdot dx^j$$
where $\omega= \omega_i dx^i$ and $\gamma=\gamma_j dx^j$.

\begin{definition}
We denote by  $ \tau M $ the dual bundle of $ \tau ^ *  M $ on $M$, and we call it   the \emph{bundle of diffusors}. A
section of $ \tau  M$ is called a \emph{diffusor}.
\end{definition}

Given  a system of coordinates $ x ^ i $ on $ M $, Remark \ref{remark_codiffusor2} ensures that $\{ d ^ 2x ^ i, dx ^ i \cdot dx ^ j \}$ form a
local basis of the fibers of $ \tau ^ * M $. Hence, it is possible to introduce  the local dual basis $\{ \partial_{x^ i}, \partial_{x^ ix ^ j}
\}$ so that
\begin{eqnarray*}
\langle d^2x^i,\partial_{x^j} \rangle &= &\delta^i_j\\
\langle dx^i \cdot dx^j, \partial_{x^k}\rangle &=& 0\\
\langle d^2x^i , \partial_{x^jx^k} \rangle &=&0\\
\langle dx^i \cdot dx^j, \partial_{x^kx^m} \rangle &=& \frac{1}{2}(\delta^i_k\delta^j_m+\delta^i_m \delta^j_k)
\end{eqnarray*}

We remark that the use of symbols $\partial_{x^i}$ and $\partial_{x^ix^j}$ for the basis of $\tau M$ is not misleading, since  the diffusors $\partial_{x^i}$ and $\partial_{x^ix^j}$ are closely related to  the partial derivatives.
Given  a diffusor $ L $ defined on $ M $, it is natural to associate with $L$  the differential operator $ L: \cinf (M) \rightarrow \cinf(M) $ defined by
\begin{equation}
L (f): = \langle d ^ 2f, L \rangle.
\end{equation}
The following result provides a characterization of diffusors through their associated differential operators.

\begin{theorem}\label{theorem_codiffusor2}
Given a diffusor $L$ on $M$, its  associated operator $L: \cinf (M) \rightarrow \cinf(M) $ is a second order linear differential
operator without multiplicative term. Conversely, if $\Lambda: \cinf (M) \rightarrow \cinf(M) $ is a second order linear differential operator  without multiplicative term,
there exists a unique diffusor $L$ on $M$ such that, $\forall f \in \cinf(M)$,
$$\Lambda(f)=L(f).$$
\end{theorem}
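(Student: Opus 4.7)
For the forward direction, I would work in a local coordinate chart. Writing the diffusor as $L = L^i \partial_{x^i} + L^{ij} \partial_{x^i x^j}$ with $L^{ij} = L^{ji}$, expanding $d^2 f$ as in Remark \ref{remark_codiffusor2}, and using the duality pairings listed above, the operator $f \mapsto \langle d^2 f, L \rangle$ reduces in coordinates to $L^i \partial_{x^i} f + L^{ij} \partial_{x^i x^j} f$, which is visibly a second order linear differential operator. The absence of a multiplicative term is intrinsic: since $d^2 1 = 0$, one has $L(1) = 0$.

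For the converse, given $\Lambda$ I would construct $L$ pointwise using Theorem \ref{theorem_codiffusor1}. For $x \in M$ and a codiffusor $\lambda$ written as $\lambda = \sum_i g_i \, d^2 f_i$, the natural candidate is
\[
\langle \lambda, L_x \rangle := \sum_i g_i(x)\, \Lambda(f_i)(x).
\]
The crucial non-routine step, and the main obstacle, is that this formula depends only on $\lambda$ and not on its representation. I plan to check this in local coordinates: the relation $\sum_i g_i \, d^2 f_i = 0$ is equivalent, by Remark \ref{remark_codiffusor2}, to the vanishing of both $\sum_i g_i \partial_{x^k} f_i$ and $\sum_i g_i \partial_{x^k x^m} f_i$ for all $k,m$; meanwhile, the local form $\Lambda = \alpha^k \partial_{x^k} + \beta^{km} \partial_{x^k x^m}$, guaranteed by the hypothesis that $\Lambda$ is second order with no multiplicative term, makes $\sum_i g_i \Lambda(f_i)$ a linear combination of precisely those vanishing quantities. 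Smoothness of $x \mapsto L_x$ is then read off the resulting local components, and taking $\lambda = d^2 f$ confirms $L(f) = \Lambda(f)$.

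Uniqueness is then short: if $L_1, L_2$ both induce $\Lambda$, the difference pairs to zero with every $d^2 f$, and Theorem \ref{theorem_codiffusor1} extends this to $\langle \lambda, L_1 - L_2\rangle = 0$ for any codiffusor $\lambda = \sum_i g_i \, d^2 f_i$, so $L_1 = L_2$. It is precisely the \emph{no multiplicative term} assumption that makes the well-definedness step work: a hypothetical zeroth-order piece in $\Lambda$ would contribute a $\sum_i g_i(x) f_i(x)$ term that is not controlled by the vanishing of the derivative combinations above, and the construction would fail exactly where it needs to succeed.
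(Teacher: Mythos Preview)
The paper does not actually prove this theorem. It is stated in the preliminaries (Section~\ref{section_preliminaries}) under the explicit disclaimer that proofs of the basic facts on second order geometry are deferred to the references (Emery, Meyer, Schwartz). So there is no in-paper proof to compare against.

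Your argument is correct on its own terms. The forward direction is immediate from the coordinate expansion together with $d^2 1 = 0$. For the converse, your pointwise construction of $L_x$ via a representation $\lambda = \sum_i g_i\, d^2 f_i$ from Theorem~\ref{theorem_codiffusor1} is sound, and the well-definedness check you outline is exactly the right computation: the vanishing of $\sum_i g_i\, d^2 f_i$ at $x$ is precisely the vanishing of the first- and second-derivative combinations that appear when you expand $\sum_i g_i(x)\,\Lambda(f_i)(x)$ using the local form $\Lambda = \alpha^k\partial_{x^k} + \beta^{km}\partial_{x^k x^m}$. Your observation that the zeroth-order term is what would break this is accurate. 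Uniqueness via Theorem~\ref{theorem_codiffusor1} is fine.

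A more pedestrian alternative for existence would be to read off $L$ directly from the local coefficients $\alpha^k,\beta^{km}$ of $\Lambda$ (symmetrizing $\beta$) and then verify by hand that these transform as the components of a section of $\tau M$ under coordinate change. Your route has the advantage of being intrinsic: by defining $L$ through its pairing with codiffusors you bypass that transformation-law computation entirely.
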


Given  two vector fields $ X, Y $   on $ M $, we consider the second order operator $ L_{XY} $ defined by
\begin{equation}
L_{XY}(f)=X(Y(f)).
\end{equation}
By Theorem \ref{theorem_codiffusor2} there exists a diffusor $L_{XY} \in \tau M$ such that, if $ X = X ^ i \partial_ {x ^ i} $ and $ Y = Y ^ i \partial_ {x ^ i} $,
\begin{equation}
L_{XY}=X^iY^j\partial_{x^ix^j}+X^i(\partial_{x^i}Y^j)\partial_{x^j}
\end{equation}
and we have
$$L_{XY}-L_{YX}=[X,Y].$$

\subsection{It\^o integration on manifolds}

Given a filtered probability space $(\Omega, \mathcal F, (\mathcal {F} _t) _ {t \in [0, T]}, \mathbb {P})$, in the following we consider only stochastic
processes (processes for short) adapted with respect
to the filtration $ \mathcal {F} _t $. Moreover, all (local) martingales are always $ \mathcal {F} _t $ (local) martingales.

Given a process $ X $ and a stopping time  $ \tau $,
we denote by $ X ^{\tau} $ the process stopped at $ \tau $. Moreover,  if $X$ and $Z$ are two real continuous semimartingales, their quadratic  covariation is denoted by $[X,Z]$ (although this notation is the same as the above commutator of  vectors fields, the different meaning will be clear from the contest).

\begin{definition}
An almost surely continuous process $X$ taking values in $ M $  is
a \emph{semimartingale} if, $ \forall f \in \cinf (M) $, $ f
(X) $ is a real continuous semimartingale.
\end{definition}
Semimartingales represent the largest class of processes for which It\^o integration can be introduced.
\begin{theorem}\label{theorem_semimartingale3}
Given a  semimartingale $ X $ on $ M $, there exists a
unique linear functional from $ S (\tau ^ * M) $  into the space
of real semimartingales, denoted by
$$\lambda \longmapsto  \int{\langle \lambda,dX_s\rangle}:=\int {\langle \lambda(X_s), dX_s \rangle },$$
such that, for $f \in \cinf(M)$ and $\lambda \in S(\tau^*M)$,
\begin{itemize}
\item $\int{\langle d^2f(X_t),dX_t\rangle}=f(X)-f(X_0);$
\item  $\int{\langle f(X_t) \lambda(X_t),dX_t\rangle}=\int{f(X_t)d\left(\int{\langle \lambda(X_s),dX_s\rangle}\right)_t},$
where the latter integral is the It\^o integral along the real semimartingale $ \int {\langle \lambda (X_s), dX_s \rangle} $.
\end{itemize}
\end{theorem}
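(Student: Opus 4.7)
The plan is to derive both uniqueness and existence from Theorem \ref{theorem_codiffusor1}. For \emph{uniqueness}, suppose a functional with the stated properties exists and take any codiffusor $\lambda$. By Theorem \ref{theorem_codiffusor1} we may write $\lambda = \sum_{i=1}^{m} g_i\, d^2 f_i$ with $g_i, f_i \in \cinf(M)$. Combining linearity with identities (1) and (2) forces
\[
\int \langle \lambda(X_s), dX_s\rangle \;=\; \sum_{i=1}^{m} \int g_i(X_s)\, d\bigl(f_i(X)\bigr)_s,
\]
where the right-hand side consists of ordinary real-valued It\^o integrals against the continuous semimartingales $f_i(X)$. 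Hence any two candidate functionals agree on every $\lambda$.

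For \emph{existence}, I would take the displayed formula as the definition. Property (1) then corresponds to the trivial decomposition $d^2 f = 1 \cdot d^2 f$, and property (2) follows by inserting the bounded process $f(X)$ inside each of the real It\^o integrals on the right. What really must be checked is that the definition is consistent, i.e.\ the sum depends only on $\lambda$ and not on the chosen representation $\sum_i g_i\, d^2 f_i$. This reduces to showing that $\sum_i g_i\, d^2 f_i \equiv 0$ implies $\sum_i \int g_i(X_s)\, d(f_i(X))_s \equiv 0$.

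I would verify this in local coordinates $x^k$ on $M$. Applying the classical It\^o formula to $f_i(X)$ yields
\[
d\bigl(f_i(X)\bigr)_t \;=\; \partial_{x^k} f_i(X_t)\, dX_t^k \;+\; \tfrac{1}{2}\,\partial_{x^k x^l} f_i(X_t)\, d[X^k,X^l]_t,
\]
so summing with weights $g_i(X_t)$ gives
\[
\sum_i g_i(X_t)\, d\bigl(f_i(X)\bigr)_t \;=\; \Bigl(\sum_i g_i\,\partial_{x^k} f_i\Bigr)(X_t)\, dX_t^k \;+\; \tfrac{1}{2}\Bigl(\sum_i g_i\,\partial_{x^k x^l} f_i\Bigr)(X_t)\, d[X^k,X^l]_t.
\]
By the coordinate expansion of $d^2 f$ preceding Remark \ref{remark_codiffusor2}, the bracketed sums coincide with the intrinsic coordinate components $\lambda_k$ and $\lambda_{kl}$ of $\lambda$. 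These components depend only on $\lambda$, and vanish when $\lambda = 0$, giving well-definedness.

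The main obstacle is the passage from this coordinate-based identification to a globally defined semimartingale, since $X$ does not stay inside any single coordinate chart. I would handle this by the standard localization: cover $M$ with coordinate charts $\{U_\alpha\}$, introduce stopping times $\tau_n$ with $X^{\tau_n}$ confined to a chart on $[0,\tau_n]$, carry out the identification above on each piece, and patch the pieces using the already-established uniqueness. Sending $\tau_n \uparrow \infty$ (finite almost surely because $X$ is an $M$-valued semimartingale) produces the global semimartingale $\int \langle \lambda, dX_s\rangle$.
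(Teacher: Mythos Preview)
The paper does not actually supply a proof of this theorem. It appears in the preliminaries section, which explicitly refers the reader to \cite{Emery1989,Meyer1981,Schwartz1982} ``for proofs and further details''; Theorem~\ref{theorem_semimartingale3} is quoted as a known result. Your sketch is essentially the standard argument one finds in those references (in particular Emery): uniqueness is forced by Theorem~\ref{theorem_codiffusor1} together with the two defining properties, and existence is obtained by taking the resulting formula as a definition and verifying it is independent of the representation via the local It\^o formula.

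Two small comments on your write-up. First, the globalization is slightly simpler than your last paragraph suggests: since Theorem~\ref{theorem_codiffusor1} produces \emph{global} functions $g_i,f_i$, the candidate $\sum_i \int g_i(X_s)\,d(f_i(X))_s$ is already a globally defined real semimartingale; the chart-by-chart localization is needed only to check that two such representations give the same process, not to build the integral itself. Second, the parenthetical ``finite almost surely'' is confusing; what you need is $\tau_n \uparrow \infty$ a.s., and this uses the (assumed) continuity of $X$: on each $[0,T]$ the path $t\mapsto X_t(\omega)$ has compact image, hence meets only finitely many charts, so the successive exit times cannot accumulate at a finite time.
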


\begin{remark}\label{remark_s}
In  Theorem \ref{theorem_semimartingale3} we define a  stochastic integral satisfying 
$$ \left(\int{\langle \lambda, dX_t \rangle} \right)_0=0.$$
However, it is easy  to extend above definition so  that $\left(\int{\langle \lambda, dX_t \rangle }\right)_s=0$ for some $s \in \mathbb{R}$.
\end{remark}

Later on we adopt the notation
$$\int_s^t{\langle \lambda(X_r), dX_r \rangle}=\left(\int{\langle \lambda(X_r), dX_r \rangle }\right)_t-\left(\int{\langle \lambda(X_r), dX_r \rangle} \right)_s.$$

Some useful properties of the  It\^o integral are collected  in the following proposition.

\begin{proposition}\label{proposition_semimartingale2}
Let $X$ be a semimartingale on $M$, $f,g \in
\cinf(M)$, $\lambda,\sigma \in S(\tau^* M)$ and let $\tau$ be a
stopping time. Then
\begin{itemize}
\item $\left(\int\langle \lambda(X_t),dX_t\rangle\right)^{\tau}=\int{\langle    \lambda(X_t^ {\tau}) ,dX_t^{\tau}\rangle}$;
\item $\int{\langle (df \cdot dg)(X_t),dX_t\rangle}=\frac{1}{2}[f(X),g(X)]$.
\end{itemize}
\end{proposition}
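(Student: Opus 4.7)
The plan is to prove both statements by reducing the manifold-valued Itô integral to real-valued Itô integrals via the characterization of codiffusors in Theorem \ref{theorem_codiffusor1} and the two defining properties of Theorem \ref{theorem_semimartingale3}. The key observation is that any codiffusor $\lambda$ can be written in the form $\lambda=\sum_i g_i d^2 f_i$ with $g_i,f_i\in\cinf(M)$; combining the two bullets of Theorem \ref{theorem_semimartingale3} yields
$$\int\langle \lambda(X_t),dX_t\rangle \;=\; \sum_i \int g_i(X_t)\, d(f_i(X))_t,$$
where the right-hand side is a finite sum of classical Itô integrals along real continuous semimartingales.

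For the first bullet, I would argue by uniqueness in Theorem \ref{theorem_semimartingale3}. The stopped process $X^{\tau}$ is itself an $M$-valued semimartingale, so the functional $\lambda\mapsto \int\langle \lambda(X_t^{\tau}),dX_t^{\tau}\rangle$ is well defined and linear from $S(\tau^*M)$ into real semimartingales. A cleaner route is to use the decomposition above: the stopping property for the real Itô integral gives
$$\Bigl(\int g_i(X_t)\, d(f_i(X))_t\Bigr)^{\tau} \;=\; \int g_i(X_t^{\tau})\, d(f_i(X)^{\tau})_t \;=\; \int g_i(X_t^{\tau})\, d(f_i(X^{\tau}))_t,$$
using continuity of $f_i$ in the last equality, and summing over $i$ gives the desired identity. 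One has to check that the identification does not depend on the chosen representation $\lambda=\sum_i g_i d^2 f_i$, but this is automatic from the uniqueness statement in Theorem \ref{theorem_semimartingale3}.

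For the second bullet, I would insert the definition $df\cdot dg=\tfrac12\bigl(d^2(fg)-g\,d^2f-f\,d^2g\bigr)$ and use linearity together with the two defining properties of Theorem \ref{theorem_semimartingale3}:
$$\int\langle (df\cdot dg)(X_t),dX_t\rangle \;=\; \tfrac12\Bigl[(fg)(X)-(fg)(X_0)-\int g(X_t)\,d f(X)_t-\int f(X_t)\,d g(X)_t\Bigr].$$
The standard real-valued integration-by-parts formula applied to the continuous semimartingales $f(X)$ and $g(X)$ gives
$$(fg)(X)-(fg)(X_0) \;=\; \int f(X_t)\,d g(X)_t+\int g(X_t)\,d f(X)_t+[f(X),g(X)],$$
and substituting back yields exactly $\tfrac12[f(X),g(X)]$.

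Neither step involves a serious analytic obstacle; the only point requiring care is that the reduction to real Itô integrals via a global representation $\lambda=\sum g_i d^2 f_i$ produces an identity that is manifestly independent of the representation, which is guaranteed by uniqueness in Theorem \ref{theorem_semimartingale3}. The proof is essentially a bookkeeping exercise translating the two bullet-point axioms into consequences that match well-known properties of the real Itô integral.
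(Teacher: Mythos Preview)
Your proof is correct. Note, however, that the paper does not actually prove this proposition: it appears in the preliminaries section, which explicitly refers the reader to \cite{Emery1989,Meyer1981,Schwartz1982} for proofs. Your argument is essentially the standard one found in those references---reduce to real-valued It\^o integrals via the representation $\lambda=\sum_i g_i\,d^2f_i$ guaranteed by Theorem~\ref{theorem_codiffusor1}, then invoke the classical stopping property for the first item and the real integration-by-parts formula for the second.
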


\section{Transformations of codiffusors and It\^o integrals}\label{section_transformations}

In this section, in order to introduce a suitable notion of symmetry, we study  the behaviour of codiffusors, diffusors and It\^o integration under spatial transformations and deterministic time changes of the process $X$.\\
Let us fix some preliminary notations:  given a smooth manifold
$M$, we  denote by  $ N = J ^ 0 (\mathbb
{R}, M) = \mathbb {R} \times M $ and we consider the time $ t $  as the first coordinate of $ N $.

\subsection{Transformations of diffusors and codiffusors}

The definition of codiffusors as sections of a suitable subbundle of $J^2(M, \mathbb{R})$ suggests the possibility of generalizing  in a natural way the pull-back of smooth functions and  differential forms to diffusors and codiffusors. The construction is purely geometric and is based on the following theorem.

\begin{theorem} \label{theorem_codiffusor4}
Given  two smooth manifolds $M$ and $M'$ and a smooth map $ \Phi: M \rightarrow M '$, there exists a unique map $ \Phi ^ *: S (\tau ^ * M') \rightarrow S (\tau ^ * M) $
such that , $ \forall f, g \in \cinf (M ') $ and $\forall \lambda, \sigma \in S (\tau ^ * M') $,
\begin{description}
\item[$i)$] $\Phi^*(d^2f)=d^2(\Phi^*(f))$,
\item[$ii)$] $\Phi^*(f\lambda+g\sigma)=\Phi^*(f)\Phi^*(\lambda)+\Phi^*(g)\Phi^*(\sigma).$
 \end{description}
\end{theorem}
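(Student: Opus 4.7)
The plan is to establish uniqueness first and then construct $\Phi^*$ by a fiberwise operation on second jets.

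Uniqueness is immediate from Theorem \ref{theorem_codiffusor1}: every $\lambda \in S(\tau^*M')$ may be written as $\lambda = \sum_{i=1}^m g_i d^2 f_i$ with $g_i, f_i \in \cinf(M')$. Conditions $i)$ and $ii)$ then force
\begin{equation*}
\Phi^*(\lambda) = \sum_{i=1}^m (g_i \circ \Phi)\, d^2(f_i \circ \Phi),
\end{equation*}
so any map obeying the two properties is uniquely determined.

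For existence, I would construct $\Phi^*$ pointwise and then check smoothness. Fix $p \in M$ and let $q = \Phi(p)$. By Remark \ref{remark_codiffusor2} any element of $\tau^*_q M'$ can be written as $d^2 h(q)$ for some $h \in \cinf(M')$ with $h(q)=0$ (a degree-two Taylor polynomial in local coordinates produces such an $h$). I set
\begin{equation*}
(\Phi^* \lambda)_p := d^2(h \circ \Phi)(p),
\end{equation*}
which lies in $\tau^*_p M$ because $(h \circ \Phi)(p) = 0$. Independence of the representative follows from the chain rule: if $h, h'$ agree in second jet at $q$ and both vanish there, then $(h - h')\circ \Phi$ vanishes together with its first and second derivatives at $p$, hence $d^2((h-h')\circ\Phi)(p) = 0$.

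Smoothness of $p \mapsto (\Phi^*\lambda)_p$ and the verification of $i)$ and $ii)$ then reduce to the explicit coordinate expression: writing $\lambda = \lambda_\alpha d^2 y^\alpha + \lambda_{\alpha\beta} dy^\alpha \cdot dy^\beta$ in local coordinates $y^\alpha$ on $M'$, one obtains
\begin{equation*}
\Phi^*\lambda = (\lambda_\alpha \circ \Phi)\, d^2(y^\alpha \circ \Phi) + (\lambda_{\alpha\beta}\circ \Phi)\, d(y^\alpha\circ\Phi) \cdot d(y^\beta \circ \Phi),
\end{equation*}
whose right-hand side depends smoothly on $p$. Property $i)$ is built into the definition, since for $f \in \cinf(M')$ the function $f - f(q)$ represents $d^2 f(q)$ and pulls back to $\Phi^*f - f(\Phi(p))$. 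Property $ii)$ follows from the fiberwise identity $(f\lambda)(q) = f(q)\lambda(q)$, since scalar multiplication commutes with the second jet and $f(\Phi(p)) = (\Phi^*f)(p)$. The main delicate point is the well-definedness of the representative-based construction, but as noted this is a direct consequence of the chain rule applied to second-order jets that vanish at the base point.
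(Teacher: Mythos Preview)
Your proof is correct. The uniqueness argument is identical to the paper's (both invoke Theorem \ref{theorem_codiffusor1}). For existence, however, the paper takes a different route: it fixes an atlas $\{U_j\}$ on $M'$ with a subordinate partition of unity $\{\phi_j\}$, defines $\Phi^*(\lambda)$ by the local coordinate formula when $\lambda$ is supported in a single chart, and then sets $\Phi^*(\lambda) := \sum_j \Phi^*(\phi_j\lambda)$ for general $\lambda$. Your construction is instead fiberwise: you use that $\tau^*_qM'$ consists of $2$-jets of functions vanishing at $q$, pull those back via composition with $\Phi$, and appeal to the chain rule for well-definedness. Your approach is more intrinsic and makes the functoriality of second jets explicit, at the cost of having to argue separately (via the same coordinate formula the paper uses as its \emph{definition}) that the resulting section is smooth; the paper's partition-of-unity construction gives smoothness for free but obscures the pointwise meaning. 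Both land on the same local expression $\Phi^*\lambda = (\lambda_\alpha\circ\Phi)\,d^2\Phi^\alpha + (\lambda_{\alpha\beta}\circ\Phi)\,d\Phi^\alpha\cdot d\Phi^\beta$.
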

\begin{proof}
The uniqueness follows from Theorem \ref{theorem_codiffusor1}.\\
To prove the existence, we set an atlas $ \{U_j \}_{ j \in \mathbb
{N}}$ on $M'$, with local coordinates $ \{y ^ i_j \}$ in $U_j$ on $M'$, and a partition of the unity $ \{\phi_j \}$ subordinated to
$ \{U_j \} $.
If the support $K$  of $ \lambda \in S (\tau ^ * M') $ is contained in the
support  of $\phi_j $ and if
$$\lambda=\lambda_i d^2y^i_j+\lambda_{ik} dy^i_j \cdot dy^k_j,$$
we define
$$\Phi^*(\lambda)=\Phi^*(\lambda_i) d^2\Phi^i_j(x) +\Phi^*(\lambda_{ik}) d\Phi^i_j(x) \cdot d\Phi^k_j(x),$$
where $\Phi^i_j(x)=(y^i_j \circ \Phi)(x)$. Note that, if $ P \in M $ is not in $\Phi^{-1}(K)$, then
$ \Phi^* (\lambda) (P) = 0 $. If $ \lambda $ is any codiffusor, then we
define
$$\Phi^*(\lambda):=\sum_j\Phi^*(\phi_j \lambda).$$
The above relation is well defined as $ \sum_j\Phi^*
(\phi_j \lambda) $ is pointwise a finite sum. Moreover, it is easy to verify
that $ \Phi^* $ satisfies  properties $i)$ and $ii)$.
${}\hfill$\end{proof}


\begin{definition}
Given a diffeomorphism $\Phi: M \to M'$, the map $ \Phi ^ * :  S (\tau ^ * M') \rightarrow S (\tau ^ * M )$ is called the \emph{pull-back} of
codiffusors.  The map $\Phi_*:   (\tau ^ * M) \rightarrow S (\tau ^ * M ')$ defined as $\Phi _ *: = (\Phi ^ {- 1}) ^ *$ is called the \emph{push-forward} of
codiffusors.
\end{definition}

\begin{theorem}\label{theorem_pullback}
Given a diffeomorphism $\Phi:M \rightarrow M'$, there exists
a unique map $\Phi^*:S(\tau M') \rightarrow S(\tau M)$ such that,
 $\forall L \in S(\tau M')$ and $\forall \lambda \in S(\tau^* M)$,
$$\langle \lambda, \Phi ^ * (L) \rangle = \Phi ^ * (\langle \Phi _ * (\lambda), L \rangle).$$
\end{theorem}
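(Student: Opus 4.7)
My plan is to leverage Theorem \ref{theorem_codiffusor2}, which identifies diffusors with a certain class of differential operators, rather than defining $\Phi^{*}(L)$ pointwise from scratch. Concretely, I would first define the candidate operator $\Lambda : \cinf(M) \to \cinf(M)$ by
$$\Lambda(f) \ := \ \Phi^{*}\bigl(L(\Phi_{*}f)\bigr), \qquad f\in\cinf(M),$$
where $\Phi_{*}f = f\circ\Phi^{-1}$ is the ordinary pushforward of functions; then I would produce $\Phi^{*}(L)$ as the unique diffusor corresponding to $\Lambda$ via Theorem \ref{theorem_codiffusor2}, and finally verify the stated pairing identity on generators $d^{2}f$ using Theorem \ref{theorem_codiffusor1}.

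The first technical step is to check that $\Lambda$ is a second order linear differential operator without multiplicative term. Linearity over $\mathbb{R}$ is immediate. The absence of a multiplicative term amounts to $\Lambda(1)=0$, which follows at once from $L(1)=0$ (since $L$ is a diffusor, again by Theorem \ref{theorem_codiffusor2}) and $\Phi_{*}(1)=1$. The second order property is a local chain-rule computation: in coordinates $x^{i}$ on $M$ and $y^{i}$ on $M'$, conjugation by a diffeomorphism sends a second order linear operator to a second order linear operator, and no higher-order derivatives of $f$ appear. This is the step with the most bookkeeping, but no conceptual difficulty.

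With $\Phi^{*}(L)$ defined this way, the pairing identity is verified by reducing to the generating case $\lambda = d^{2}f'$ with $f'\in\cinf(M')$. On one hand, by definition of the associated operator and of $\Lambda$,
$$\langle d^{2}(\Phi^{*}f'),\Phi^{*}(L)\rangle \ = \ \Lambda(\Phi^{*}f') \ = \ \Phi^{*}\bigl(L(\Phi_{*}\Phi^{*}f')\bigr) \ = \ \Phi^{*}\bigl(L(f')\bigr).$$
On the other hand, property $i)$ of Theorem \ref{theorem_codiffusor4} applied to $\Phi^{-1}$ gives $\Phi_{*}(d^{2}(\Phi^{*}f')) = d^{2}f'$, so $\Phi^{*}(\langle\Phi_{*}(d^{2}\Phi^{*}f'),L\rangle)=\Phi^{*}(L(f'))$ as well. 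Writing an arbitrary $\lambda\in S(\tau^{*}M)$ as $\sum g_{i}\, d^{2}f_{i}$ by Theorem \ref{theorem_codiffusor1} and using the $\cinf(M)$-linearity of both sides in $\lambda$ (which follows from property $ii)$ of Theorem \ref{theorem_codiffusor4} together with the identity $\Phi^{*}\circ\Phi_{*}=\mathrm{id}$ on functions) extends the identity to every codiffusor.

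Finally, uniqueness: if $L_{1}, L_{2}\in S(\tau M)$ both satisfy the stated relation, then $\langle\lambda,L_{1}-L_{2}\rangle=0$ for every $\lambda\in S(\tau^{*}M)$; since at each $P\in M$ every element of the fibre $\tau^{*}_{P}M$ is the value at $P$ of some global codiffusor (cut-off functions plus the local basis $\{d^{2}x^{i},dx^{i}\cdot dx^{j}\}$ from Remark \ref{remark_codiffusor2}), and since the pairing is non-degenerate fibrewise, this forces $L_{1}=L_{2}$. The main obstacle is really only the second-order verification in the first step; everything else is driven by Theorems \ref{theorem_codiffusor1}, \ref{theorem_codiffusor2} and the axiomatic properties of $\Phi^{*}$ on codiffusors.
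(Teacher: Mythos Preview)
Your proposal is correct and follows essentially the same route as the paper: define the conjugated operator $\Lambda(f)=\Phi^{*}(L(\Phi_{*}f))$, invoke Theorem~\ref{theorem_codiffusor2} to obtain the corresponding diffusor, and then use Theorem~\ref{theorem_codiffusor1} to pass from the identity on $d^{2}f$'s to arbitrary codiffusors. The paper's proof is considerably terser (it omits the verification that $\Lambda$ is second order without zero-order term and the explicit uniqueness argument), but the underlying strategy is identical.
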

\begin{proof}
Given $L \in S(\tau M')$, we  consider  the second order differential
operator $L'$ on $\cinf(M)$ such that $\forall f \in \cinf(M)$
$$L'(f)=\Phi^*L(\Phi_*(f))).$$
By Theorem \ref{theorem_codiffusor2} there exists a unique
diffusor $L' \in S(\tau M)$ such that
$$L'(f)=\langle d^2 f, L'\rangle.$$
Then, by Theorem \ref{theorem_codiffusor1}, we have
$L'=\Phi^*(L)$. ${}\hfill$\end{proof}

\begin{definition}
Given a diffeomorphism $\Phi: M \to M'$, we call $ \Phi^*:S(\tau M') \rightarrow S(\tau M)$ the
\emph{pull-back} of diffusors and $\Phi_*:S(\tau M) \rightarrow
S(\tau M')$ the \emph{push-forward} of diffusors.
\end{definition}

\begin{remark} \label{remark_codiffusor4}
If $ \Phi: M \rightarrow M '$ is a smooth function, $\forall
\mu, \sigma \in S (T ^ * M') $ we have
$$ \Phi ^ * (\mu \cdot \sigma) = \Phi ^ * (\mu) \cdot \Phi ^ * (\sigma). $$
Moreover, if $ \Phi $ is invertible,  $\forall X, Y \in S (TM ') $
$$ \Phi^ * (L_{XY}) = L_{\Phi ^ * (X) \Phi ^ * (Y)}. $$
All the previous expressions hold when we replace $ \Phi ^ * $ with $ \Phi _ * $.
\end{remark}

If we consider a  one-parameter group of diffeomorphisms $\Phi_a$ describing the flow of a vector field $X$, we can give the following definition.

\begin{definition}
Given a vector field  $ X $  on $ M $, with corresponding one-parameter  flow  $ \Phi_a $,  the \emph{Lie
derivative} of a codiffusor (diffusor) $ \lambda $ along $X$  is
\begin{equation}
 \mathcal {L} _ {X} \lambda = \left[ \frac {d}{da} (\Phi_a ^ * \lambda ) \right]_ {a = 0}.
\end{equation}
\end{definition}

The following theorem permits to compute the Lie derivatives of many important objects.

\begin{theorem}\label{theorem_codiffusor5}
Let $X,X_1,X_2$ be three vector fields on $M$, $L$ a diffusor on
$M$, $\lambda$ a codiffusor on $M$,  $f$ a smooth function on $M$,
$\mu,\sigma$ two differential forms and $\Phi:M \rightarrow M'$ a
diffeomorphism from $M$ onto $M'$. Then
\begin{enumerate}
\item $\mathcal{L}_X(f\lambda)=\mathcal{L}_X(f) \lambda+f \mathcal{L}_X(\lambda)$,
\item $\mathcal{L}_X(fL)=\mathcal{L}_X(f) L+f \mathcal{L}_X(L)$,
\item $\mathcal{L}_X(\langle \lambda,L\rangle)=\langle \mathcal{L}_X(\lambda),L\rangle+\langle \lambda,\mathcal{L}_X(L)\rangle$,
\item $\mathcal{L}_X(d^2f)=d^2(\mathcal{L}_X(f))=d^2(X(f))$,
\item $\mathcal{L}_X(L)(f)=X(L(f))-L(X(f))$,
\item $\mathcal{L}_X(\mu \cdot \sigma)=\mathcal{L}_X(\mu) \cdot \sigma+\mu \cdot \mathcal{L}_X(\sigma)$,
\item $\mathcal{L}_X(L_{X_1X_2})=L_{[X,X_1]X_2}+L_{X_1[X,X_2]}$,
\item $\mathcal{L}_{\Phi_*X}(\Phi_* \lambda)=\Phi_*(\mathcal{L}_X(\lambda))$,
\item $\mathcal{L}_{\Phi_*X}(\Phi_*L)=\Phi_*(\mathcal{L}_X(L))$.
\end{enumerate}
\end{theorem}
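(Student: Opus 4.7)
All nine identities are to be obtained by a uniform procedure: write each side as the derivative at $a=0$ of a corresponding \emph{pointwise} identity for the pull-back $\Phi_a^*$ along the flow of $X$, then invoke the ordinary product rule. The preparatory observations are that $\Phi_a^*$ commutes with every algebraic operation appearing in the statement; these compatibilities are essentially packaged in Theorems \ref{theorem_codiffusor4} and \ref{theorem_pullback} and in Remark \ref{remark_codiffusor4}. Note in particular that, since $\Phi_a^* f=f\circ\Phi_a$, one recovers at once $\mathcal L_X f = X(f)$, which is the building block for items 4 and 5.

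Items 1 and 2 are the derivatives at $a=0$ of $\Phi_a^*(f\lambda)=\Phi_a^*(f)\Phi_a^*(\lambda)$ (Theorem \ref{theorem_codiffusor4}(ii) with $\sigma=0$) and of its diffusor analogue; the latter is obtained by pairing against $d^2 g$ for arbitrary $g\in\cinf(M)$ and invoking the uniqueness clause in Theorem \ref{theorem_codiffusor2}. Item 4 is the derivative of Theorem \ref{theorem_codiffusor4}(i). Item 6 is the derivative of the identity $\Phi_a^*(\mu\cdot\sigma)=\Phi_a^*(\mu)\cdot\Phi_a^*(\sigma)$ supplied by Remark \ref{remark_codiffusor4}.

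For item 3, I first need the preliminary identity $\Phi_a^*(\langle\lambda,L\rangle)=\langle\Phi_a^*\lambda,\Phi_a^*L\rangle$; this follows from Theorem \ref{theorem_pullback} by substituting $\Phi_{a,*}L$ for $L$ and simplifying via $\Phi_a^*\Phi_{a,*}=\mathrm{id}$. Differentiating at $a=0$ gives item 3. Specialising to $\lambda=d^2 f$ and using items 2, 4 together with $\langle d^2f,L\rangle=L(f)$ yields item 5. Item 7 is then a direct computation on top of item 5: with $L=L_{X_1X_2}$ one gets $(\mathcal L_X L_{X_1X_2})(f)=X(X_1(X_2 f))-X_1(X_2(Xf))$, which rearranges, after inserting $\pm X_1(X(X_2 f))$, into $[X,X_1](X_2 f)+X_1([X,X_2]f)$, i.e.\ the action of $L_{[X,X_1]X_2}+L_{X_1[X,X_2]}$ on $f$; the uniqueness part of Theorem \ref{theorem_codiffusor2} concludes.

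For items 8 and 9 the key remark is that the one-parameter flow of $\Phi_*X$ on $M'$ is $\Psi_a=\Phi\circ\Phi_a\circ\Phi^{-1}$, so by functoriality of the pull-back (immediate from the uniqueness in Theorem \ref{theorem_codiffusor4}) one has $\Psi_a^*(\Phi_*\lambda)=\Phi_*(\Phi_a^*\lambda)$, and likewise $\Psi_a^*(\Phi_*L)=\Phi_*(\Phi_a^* L)$; differentiating at $a=0$ gives the two claims. No step poses a genuine obstacle; the main care to exercise is checking each pull-back compatibility \emph{before} differentiating, and the only item that is not a direct quotation is item 3, where the compatibility of the pairing with $\Phi_a^*$ requires the short manipulation of Theorem \ref{theorem_pullback} described above.
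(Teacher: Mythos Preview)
Your proposal is correct and follows exactly the route the paper indicates: the paper's own proof consists of the single sentence that all items are ``an easy application of the properties of the pull-back of diffusors and codiffusors and of the Leibniz rule for the derivative of a product,'' and what you have written is precisely a detailed execution of that plan. Your derivation of the pairing compatibility needed for item 3 from Theorem \ref{theorem_pullback}, and your handling of items 8--9 via the conjugated flow $\Phi\circ\Phi_a\circ\Phi^{-1}$, are the natural ways to fill in the details the paper omits.
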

\begin{proof}
The proof is an easy application of the properties of the
pull-back of diffusors and codiffusors and of the Leibniz rule for the derivative of a
product.\\  ${}\hfill$\end{proof}

From Theorem \ref{theorem_codiffusor5} we  obtain the explicit
coordinate expression of the Lie derivative of diffusors and
codiffusors along a vector field $X$. In particular, if $ X = \phi ^ i \partial_
{x ^ i} $,    $L = A ^ {ij}
\partial_ {x^ ix ^ j} + b ^ i \partial_ {x ^ i} $ and $\lambda=\lambda_i d^2x^i+\lambda_{ij}dx^i\cdot dx^j$, we have
\begin{eqnarray*}
\mathcal{L}_X(L)&=&(\phi^k\partial_{x^k}A^{ij}-A^{ik}\partial_{x^k}\phi^j-A^{kj}\partial_{x^k}\phi^i)\partial_{x^ix^j}\\
&&+(\phi^k\partial_{x^k}b^i-b^k\partial_{x^k}\phi^i-A^{jk}\partial_{x^jx^k}\phi^i)\partial_{x^i}.
\end{eqnarray*}
\begin{eqnarray*}
\mathcal{L}_X\lambda&=&(\phi^k\partial_{x^k}\lambda_{ij}+\lambda_{ik} \partial_{x^j}\phi^k+\lambda_{kj} \partial_{x^i}\phi^k+\lambda_k\partial_{x^ix^j}\phi^k)dx^i \cdot dx^j\\
&&+(\phi^k\partial_{x^k}\lambda_i+\lambda_k \partial_{x^i}\phi^k)d^2x^i.
\end{eqnarray*}
In order to generalize the geometric approach to symmetry problem  from ODEs  to diffusion processes, it is useful to give the following definition.

\begin{definition}\label{definition_module}
A subset  $\Gamma$  of $S(\tau M)$ (or $S(\tau^* M)$) is  a \emph{module of dimension $k$} if
\begin{enumerate}
\item  $\forall L_1,L_2 \in \Gamma$ also $L_1+L_2 \in \Gamma$,
\item  $\forall L \in \Gamma$ and $f \in \cinf(M)$ we have  $f L \in \Gamma$,
\item for each point $P$ there exist a neighborhood $U$ of $P$ and $k$ diffusors (codiffusors) $L_1,...,L_k \in \Gamma$ such that,  $ \forall \, L \in \Gamma$, we have
 $L=\sum_{i=1}^k f_iL_i$ in $U$, where  $f_1,...,f_k$ are suitable functions in $\cinf(M)$. Furthermore, for any $Q \in U$,  $L_1(Q),...,L_k(Q)$ are $k$ linearly independent elements
of $\tau_QM$ (or $\tau^*_QM$).
\end{enumerate}
\end{definition}

In particular, given $ L \in S (\tau M) $ such
that $ L (P) \not = 0 $ for all $P \in M$, we can consider the one-dimensional module
$$\mathfrak{L}_L=\{ f L| f \in \cinf(M)\}$$
and its annihilator, i.e. the set of
codiffusors
$$\Lambda_L=\{\lambda \in S(\tau^* M) | \langle \lambda,L\rangle=0\}$$
which is a module of rank  $(m-1)$ , where $m= \rank(\tau ^ * M)$.

\begin{definition}
Let $\Gamma$ be a $k$-dimensional module on $M$. A diffeomorphism
$\Phi:M \rightarrow M$ is  a \emph{symmetry} of $\Gamma$ if
$\Phi_*(\Gamma)=\Gamma$. A complete vector field $X \in S(T M)$ is
an \emph{infinitesimal symmetry} of $\Gamma$ if the flow $\Phi_a$ of $X$
is a symmetry of $\Gamma$ for all $a \in \mathbb{R}$.
\end{definition}

\begin{theorem}\label{theorem_codiffusor6}
A complete vector field $X$ is a symmetry of $\Gamma$ if and only
if, $\forall L \in \Gamma$, we have  $\mathcal{L}_X L \in
\Gamma$ (or simply $\mathcal{L}_X(\Gamma) \subseteq \Gamma$).
\end{theorem}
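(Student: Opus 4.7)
The plan is to handle the two implications separately. The forward direction will follow from differentiating a smooth family of elements of $\Gamma$, while the reverse direction—the harder one—will rely on a fiberwise linear-ODE uniqueness argument.

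For $(\Rightarrow)$, I would assume $X$ is an infinitesimal symmetry, so $\Phi_a^*\Gamma = \Gamma$ for every $a$ (using $\Phi_a^{-1} = \Phi_{-a}$). Fixing $L \in \Gamma$ and a point $P$, choose a local basis $L_1,\dots,L_k$ of $\Gamma$ on a neighborhood $U$ of $P$. For $|a|$ small, $\Phi_a^*L$ lies in $\Gamma$ and therefore admits a unique smooth decomposition $\Phi_a^*L = \sum_{i=1}^k f_i(a,x)L_i(x)$ with $f_i\in\cinf$. Differentiating at $a=0$ yields $\mathcal{L}_XL = \sum_i \partial_a f_i(0,x)\,L_i$, which lies in $\Gamma$ on $U$ and hence globally by arbitrariness of $P$.

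For $(\Leftarrow)$, I would assume $\mathcal{L}_X\Gamma \subseteq \Gamma$ and fix a local basis $L_1,\dots,L_k$ of $\Gamma$ on $U$. The hypothesis yields $\mathcal{L}_XL_i = \sum_{p=1}^k C_i^p L_p$ with $C_i^p \in \cinf(U)$. Setting $M_i(a,x) := \Phi_a^*(L_i)(x)$, the one-parameter-group identity $\partial_a\Phi_a^* = \Phi_a^*\circ\mathcal{L}_X$ together with $\Phi_a^*(fL) = (f\circ\Phi_a)\,\Phi_a^*L$ (a direct consequence of Theorem \ref{theorem_pullback}) gives
\begin{equation*}
\partial_a M_i(a,x) \;=\; \sum_{p=1}^k (C_i^p\circ\Phi_a)(x)\, M_p(a,x).
\end{equation*}
At fixed $x$ this is a linear ODE for $(M_1,\dots,M_k)$ in the finite-dimensional fiber $\tau_xM$, with initial data $M_i(0,x) = L_i(x) \in V_x := \mathrm{span}\{L_1(x),\dots,L_k(x)\}$. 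Projecting to the quotient $\tau_xM/V_x$ produces an ODE with the same coefficient matrix but vanishing initial data, so Picard uniqueness forces $M_i(a,x) \in V_x$ throughout the interval of definition. Consequently $\Phi_a^*L_i$ is a $\cinf(U)$-combination of $L_1,\dots,L_k$, and by the module axioms $\Phi_a^*L \in \Gamma$ for every $L \in \Gamma$ and every small $a$.

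Globalization uses completeness of $X$: the identity $\Phi_{na} = \Phi_a^n$ iterates the small-$a$ conclusion to arbitrary $a \in \mathbb{R}$, and the locality built into Definition \ref{definition_module} makes patching across charts routine. The codiffusor version of the theorem runs identically, with Theorem \ref{theorem_codiffusor4} replacing Theorem \ref{theorem_pullback}. The main obstacle is the $(\Leftarrow)$ direction: one must convert the hypothesis $\mathcal{L}_X\Gamma\subseteq\Gamma$—a statement about sections—into a fiberwise finite-dimensional linear ODE, and extract the invariance of the subspace $V_x$ from Picard uniqueness.
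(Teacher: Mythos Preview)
Your argument is correct and follows essentially the same strategy as the paper's sketch: set up a linear evolution equation for the flow-transported generators, use the hypothesis $\mathcal{L}_X\Gamma\subseteq\Gamma$ to obtain a block-triangular structure, and invoke uniqueness to keep the transverse component identically zero. The only packaging difference is that the paper extends $L_1,\dots,L_k$ to a full frame $L_1,\dots,L_r$ of $\tau M$, writes a first-order PDE in $(a,x)$ for the scalar coefficients $(\alpha,\beta)$, and solves it by characteristics, whereas you work directly in the fibre $\tau_xM$, obtain an ODE in $a$ at each fixed $x$, and replace the $\beta$-coordinates by the quotient $\tau_xM/V_x$; these are the same computation, and your fibrewise ODE is precisely the characteristic equation of the paper's PDE.
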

\begin{proof}
We give only a sketch of the proof; further details can be found in \cite{DMU1}.\\
If $\Phi_{a,*}(\Gamma)=\Gamma$, evaluating in zero the derivatives with respect to $a$, we get $\mathcal{L}_X(\Gamma) \subseteq \Gamma$. \\
Conversely,  suppose that $\mathcal{L}_X(\Gamma) \subseteq \Gamma$. Let $L_1,...,L_r$ be local generators for $\tau M$ and choose $L_i$
such that, for $i=1,...,k$, they are also local generators for the module $\Gamma$. Given a diffusor $L$,  there exist some functions
$\alpha_1,...,\alpha_k$ and $\beta_1,...,\beta_{r-k}$ (depending on $a$ and $x$) such that $\Phi_{a,*}(L)=\sum_{i=1}^k\alpha_i L_i+\sum_{i=1}^{r-k} \beta_i L_{i+k}$. Since $\mathcal{L}_X(\Gamma) \subseteq \Gamma$, the functions $\alpha_i,\beta_i$ satisfy the following system of first order PDEs
\begin{equation}\label{equation_geo1}
\left(\begin{array}{c}
\partial_a(\alpha)\\
\partial_a(\beta)
\end{array}\right)=
\left(\begin{array}{c}
X(\alpha)\\
X(\beta)
\end{array}\right)+\left( \begin{array}{cc}
A(x) & B(x) \\
0 & C(x)
\end{array}\right) \cdot \left( \begin{array}{c}
\alpha\\
\beta
\end{array}
\right),
\end{equation}
where $\alpha=(\alpha_1,...,\alpha_k)$, $\beta=(\beta_1,...,\beta_{r-k})$ and $A(x),B(x),C(x)$ are suitable matrix-valued functions. Using the method of characteristics it is  possible to prove that equation \refeqn{equation_geo1} admits a unique solution for any smooth initial value $\alpha(0),\beta(0)$. Moreover,   the form of equation \refeqn{equation_geo1} ensures  that, if $\beta(0)=0$,  then $\beta(a)=0$ for any $a \in \mathbb{R}$. Hence, since  $L \in \Gamma$, we have that  $\beta(0)=0$ and $\Phi_{a,*}(L) \in \Gamma$ for any $a$. ${}\hfill$\end{proof}

\subsection{It\^o integral and  space and time transformations}\label{subsection}

In the following we study the behaviour of a semimartingale under space and time  transformations.

\begin{proposition}\label{proposition_seimaringale1}
Given two manifolds  $M$ and $M'$, a semimartingale $ X $   on $ M $ and a smooth function $ \Phi:
M \rightarrow M '$, the process $ X '= \Phi (X) $,
defined as $ X'_t = \Phi (X_t) $, is a semimartingale on $ M' $.
\end{proposition}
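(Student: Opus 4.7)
The plan is to unfold the definition of semimartingale on a manifold and observe that the statement reduces to a triviality once composition of smooth functions is invoked.

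First, I would recall the definition: a process $Y$ on $M'$ is a semimartingale if it is almost surely continuous and if, for every $g \in \cinf(M')$, the real-valued process $g(Y)$ is a continuous semimartingale. So to show that $X' = \Phi(X)$ is a semimartingale on $M'$, I need to verify these two properties for $X'$.

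For continuity, since $X$ is a.s.\ continuous by hypothesis and $\Phi$ is smooth (hence continuous), the composition $t \mapsto \Phi(X_t)$ is a.s.\ continuous as a map into $M'$.

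For the second condition, take any $g \in \cinf(M')$. Then $g \circ \Phi \in \cinf(M)$, since the composition of smooth maps between manifolds is smooth. By the assumption that $X$ is a semimartingale on $M$, the real process $(g \circ \Phi)(X) = g(\Phi(X)) = g(X')$ is a continuous real semimartingale. Since $g$ was arbitrary, this verifies the semimartingale property for $X'$ on $M'$.

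There is no real obstacle: the whole content is the functoriality of the semimartingale property under smooth maps, which follows directly from the chart-free definition through test functions. The only thing to be slightly careful about is that $\Phi$ need not be a diffeomorphism (it is only smooth), but this causes no trouble since only the pullback of functions, $g \mapsto g \circ \Phi$, is required, and that operation sends $\cinf(M')$ to $\cinf(M)$ without any invertibility hypothesis.
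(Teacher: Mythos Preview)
Your proof is correct and is exactly the argument the paper has in mind: the paper's own proof consists of the single line ``The proof is an easy consequence of the definition of semimartingale on a manifold,'' and you have simply unpacked that consequence by noting that $g\circ\Phi\in\cinf(M)$ for every $g\in\cinf(M')$.
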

\begin{proof}
The proof is an easy consequence of the  definition of semimartingale on a manifold.${}\hfill$
\end{proof}

\medskip
In order to introduce  time transformations, we consider  a strictly increasing function $ f \in \cinf
(\mathbb {R}) $, so that also $ f ^ {- 1} $  is a
smooth strictly increasing function. If $ X $ is a semimartingale
on $ M $, we  denote by $ X '= H_f (X) $ the process
$$ X '_ {t'} = H_f (X) _ {t '}:= X_ {f ^ {- 1} (t')}. $$
Moreover, working towards a unified description of space and time transformations, we consider a smooth map $\Phi:M \rightarrow M'$,  a
deterministic time change $f$ and a semimartingale $X$ on $M$, and we define
$$\Phi_f(X)=H_f(\Phi(X)).$$

\begin{theorem}\label{theorem_semimartingale5}
With the above notations, $\forall \lambda \in S(\tau^*M')$
$$\int{\langle \lambda(\Phi_f(X)_t),d\Phi_f(X)_t\rangle}=H_f\left(\int{\langle \Phi^*(\lambda)(X_t),dX_t\rangle}\right).$$
\end{theorem}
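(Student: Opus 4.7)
The plan is to prove the statement in two stages: first establish the identity for a pure spatial transformation (taking $f=\mathrm{id}$), then for a pure deterministic time change (taking $\Phi=\mathrm{id}_M$), and finally compose the two results. The common technique will be to invoke the uniqueness part of Theorem \ref{theorem_semimartingale3} that characterises the It\^o integral on a manifold by the two defining properties $i)$ and $ii)$.

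For the spatial step I would show
$$\int \langle \lambda(\Phi(X)_t), d\Phi(X)_t\rangle = \int \langle \Phi^*(\lambda)(X_t), dX_t\rangle$$
for every $\lambda \in S(\tau^*M')$. By Proposition \ref{proposition_seimaringale1}, $\Phi(X)$ is a semimartingale on $M'$, so the left-hand side is precisely the functional characterised by Theorem \ref{theorem_semimartingale3}. The right-hand side is linear in $\lambda$ because $\Phi^*$ is linear by Theorem \ref{theorem_codiffusor4}$ii)$; it therefore suffices to check the two defining properties. For $\lambda=d^2g$ with $g\in\cinf(M')$, Theorem \ref{theorem_codiffusor4}$i)$ gives $\Phi^*(d^2g)=d^2(g\circ\Phi)$, whence $\int\langle\Phi^*(d^2g)(X_t),dX_t\rangle = (g\circ\Phi)(X)-(g\circ\Phi)(X_0)=g(\Phi(X))-g(\Phi(X_0))$. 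For the multiplicativity property, $\Phi^*(g\lambda)=(g\circ\Phi)\Phi^*(\lambda)$ by Theorem \ref{theorem_codiffusor4}$ii)$, and the second bullet of Theorem \ref{theorem_semimartingale3} applied to $X$ turns this into the corresponding multiplicativity on $\Phi(X)$. Uniqueness then forces the two functionals to coincide.

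For the time-change step, with $Y$ any semimartingale on $M'$ and $\lambda\in S(\tau^*M')$, I would verify
$$\int \langle \lambda(H_f(Y)_{t'}), dH_f(Y)_{t'}\rangle = H_f\!\left(\int \langle \lambda(Y_s), dY_s\rangle\right).$$
Since $f$ is deterministic, smooth and strictly increasing, $H_f(Y)$ is again a semimartingale (with respect to the time-changed filtration), and the right-hand side is linear in $\lambda$. Property $i)$ of Theorem \ref{theorem_semimartingale3} follows from $H_f(g(Y)-g(Y_0))_{t'}=g(Y_{f^{-1}(t')})-g(Y_0)=g(H_f(Y)_{t'})-g(H_f(Y)_{0'})$, where Remark \ref{remark_s} is used to normalise the initial value of the integral to the new time origin $0'$. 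Property $ii)$ reduces to the classical scalar fact that a deterministic continuous time change commutes with the real-valued It\^o integral, i.e.\ $H_f\bigl(\int g(Y_s)\,dZ_s\bigr) = \int g(H_f(Y)_{s'})\,dH_f(Z)_{s'}$ for any real semimartingale $Z$ and smooth $g$. Uniqueness in Theorem \ref{theorem_semimartingale3} then delivers the equality.

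Combining the two stages, first by applying the spatial identity to $X$ and $\Phi$ and then pushing the resulting equality through the time change $H_f$ applied to $\Phi(X)$, yields
$$\int\langle\lambda(\Phi_f(X)_{t'}),d\Phi_f(X)_{t'}\rangle = H_f\!\left(\int\langle\lambda(\Phi(X)_t),d\Phi(X)_t\rangle\right) = H_f\!\left(\int\langle\Phi^*(\lambda)(X_t),dX_t\rangle\right).$$
The main obstacle I anticipate is the careful verification of property $ii)$ in the time-change step: while intuitively obvious, it requires identifying $\int\langle\lambda(Y_s),dY_s\rangle$ with a classical real It\^o integral and invoking deterministic time-change invariance in that setting, with particular care when $f(0)\neq 0$ so that the initial-value normalisation of the integral must be tracked through Remark \ref{remark_s}.
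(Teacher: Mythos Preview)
Your proof is correct and rests on the same three ingredients as the paper's: the uniqueness characterisation of the manifold It\^o integral in Theorem~\ref{theorem_semimartingale3}, the pull-back identity $\Phi^*(d^2g)=d^2(g\circ\Phi)$ from Theorem~\ref{theorem_codiffusor4}, and the commutation of real It\^o integrals with absolutely continuous deterministic time changes. The only difference is organisational. The paper handles the composite transformation $\Phi_f$ in a single pass: it defines $I(\lambda)=H_f\bigl(\int\langle\Phi^*(\lambda),dX_t\rangle\bigr)$ directly and verifies the two characterising properties for this operator at once, invoking the scalar time-change rule (cited there as \cite[Proposition~30.10]{Williams1987}) exactly where property $ii)$ is checked. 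Your two-stage decomposition (spatial map first, then time change, then compose) is slightly longer but separates the geometric pull-back computation from the probabilistic time-change computation, which is pedagogically cleaner; the paper's single-pass route is more economical. Neither approach avoids the external scalar lemma, so they are equivalent in depth.
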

\begin{proof}
We define the linear operator $I$ from $S(\tau^*M')$ into the set of real semimartingales such that
$$I(\lambda)=H_f\left(\int{\langle \Phi^*(\lambda)(X_t), dX_t \rangle} \right).$$
Using Theorem \ref{theorem_codiffusor4}, Theorem \ref{theorem_semimartingale3} and the definition of $\Phi_f$, we have
\begin{eqnarray*}
I(d^2g)&=&H_f\left(\int{\langle \Phi^*(d^2g)(X_t), dX_t \rangle} \right)\\
&=&H_f\left(\int{\langle d^2\Phi^*(g)(X_t), dX_t \rangle} \right)\\
&=&H_f(g(\Phi(X))-g(\Phi(X)_0))=g(\Phi_f(X))-g(\Phi_f(X)_0),
\end{eqnarray*}
Furthermore, the change rule of It\^o integral with respect to absolutely continuous time changes (see, e.g., \cite[Proposition
30.10]{Williams1987}) ensures that
\begin{eqnarray*}
I(g\lambda)&=&H_f\left(\int{\langle \Phi^*(g\lambda)(X_t), dX_t \rangle} \right)\\
&=&H_f\left(\int{\langle \Phi^*(g)(X_t)\Phi^*(\lambda)(X_t), dX_t \rangle} \right)\\
&=&H_f\left(\int{\Phi^*(g)(X_t) d\left(\int{\langle \Phi^*(\lambda), dX_s \rangle}\right)_t}\right)\\
&=&\int{g(\Phi_f(X)_t)dI(\lambda)_t}.
\end{eqnarray*}
Hence, using the characterization of It\^o integral given in Theorem \ref{theorem_semimartingale3}, we have $I(\lambda)=\int{\langle \lambda,d\Phi_f(X) \rangle}$
and this completes the proof.${}\hfill$
\end{proof}

\medskip

Given a semimartingale  $X$ on $M$, the semimartingale
$$ \overline {X} _t = (t, X_t) \in N $$
is called the \textit{lifting} of $ X $ to $ N $. When $ \lambda \in S (\tau ^ * N) $, we  use the following   notation
$$\int{\langle \lambda,dX_t\rangle}:=\int{\langle \lambda(\overline{X}_t),d\overline{X}_t\rangle}.$$

Given a transformation $ \overline {\Phi} : N \rightarrow N'$, we write $ \overline {\Phi}= (f, \Phi)$, where  $ f$  is the component of $ \overline {\Phi} $ over $
\mathbb {R} $  and $ \Phi$ is  the component of $ \overline {\Phi} $ over
 $ M $. If  $ f $ depends only on $ t$ we say that $ \overline {\Phi}= (f, \Phi)$ is \emph{projectable}.
We call \emph{semimartingale transformation} any  diffeomorphism  $ \overline {\Phi}$ which is projectable. We denote by $ X' = \Phi_f (\overline{X}) $ the transformed semimartingale given by
$$X'_{t'}=\Phi_f(f^{-1}(t'),X_{f^{-1}(t')}).$$

\begin{remark}\label{remark_codiffusor1}
The lifting  $\overline{X}'$ of $ X' $ to $ N '$ satisfies
$$\overline{X'}=H_f(\overline{\Phi}(\overline{X})).$$
\end{remark}

\begin{theorem}\label{theorem_semimartingale6}
Let $ \overline {\Phi} = (f, \Phi): N \rightarrow N '$ be a
semimartingale transformation, and $\lambda \in S (\tau ^
* N ') $; then
$$\int{\langle \lambda, d\Phi_f(X)_t\rangle}=H_f\left(\int{\langle \overline{\Phi}^*(\lambda),dX_t\rangle}\right).$$
\end{theorem}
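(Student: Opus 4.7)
The plan is to reduce Theorem \ref{theorem_semimartingale6} to Theorem \ref{theorem_semimartingale5} applied on the \emph{lifted} spaces $N = \mathbb{R} \times M$ and $N' = \mathbb{R} \times M'$, using Remark \ref{remark_codiffusor1} as the bridge between the two formulations. The key is that the notation $\int \langle \lambda, dX'_t\rangle$ on the left-hand side is defined, for $\lambda \in S(\tau^* N')$, as $\int \langle \lambda(\overline{X'}_t), d\overline{X'}_t\rangle$, so the integration is really against the lifted process $\overline{X'}$.

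First, I would rewrite the left-hand side according to the convention introduced just before the theorem:
\begin{equation*}
\int \langle \lambda, d\Phi_f(X)_t\rangle = \int \langle \lambda(\overline{X'}_t), d\overline{X'}_t\rangle,
\end{equation*}
where $\overline{X'}$ denotes the lifting of $X' = \Phi_f(X)$ to $N'$. By Remark \ref{remark_codiffusor1}, $\overline{X'} = H_f(\overline{\Phi}(\overline{X}))$, which is precisely $\overline{\Phi}_f(\overline{X})$ in the sense of the combined transformation of Theorem \ref{theorem_semimartingale5}, now regarding $\overline{X}$ as a semimartingale on $N$, $\overline{\Phi}: N \to N'$ as the spatial map, and $f$ as the deterministic time change.

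Next, I would invoke Theorem \ref{theorem_semimartingale5} directly at this lifted level: for any $\lambda \in S(\tau^* N')$,
\begin{equation*}
\int \langle \lambda(\overline{\Phi}_f(\overline{X})_t), d\overline{\Phi}_f(\overline{X})_t\rangle = H_f\!\left(\int \langle \overline{\Phi}^*(\lambda)(\overline{X}_t), d\overline{X}_t\rangle\right).
\end{equation*}
The left-hand side is exactly $\int \langle \lambda, d\Phi_f(X)_t\rangle$ by the previous step, and the right-hand side, by the same notational convention applied on $N$ (now for the codiffusor $\overline{\Phi}^*(\lambda) \in S(\tau^* N)$ integrated against the lifted process $\overline{X}$), equals $H_f\bigl(\int \langle \overline{\Phi}^*(\lambda), dX_t\rangle\bigr)$. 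Composing these identifications yields the claim.

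I do not anticipate a genuine obstacle: the content is bookkeeping between the two notational layers (processes on $M,M'$ versus their liftings to $N,N'$). The only subtle point is that projectability of $\overline{\Phi}$ is implicitly used so that the $f$ component on $\mathbb{R}$ depends only on $t$ and can legitimately serve as the deterministic time change required by Theorem \ref{theorem_semimartingale5}; without projectability, $H_f(\overline{\Phi}(\overline{X}))$ would not be the lifting of a well-defined semimartingale on $M'$.
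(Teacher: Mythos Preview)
Your proof is correct and follows exactly the approach indicated in the paper, which states only that the result is ``a simple application of Theorem \ref{theorem_semimartingale5} and Remark \ref{remark_codiffusor1}.'' You have simply spelled out the bookkeeping behind that one-line proof, and your remark on the role of projectability is an apt clarification.
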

\begin{proof}
The proof is a simple application of Theorem \ref{theorem_semimartingale5} and Remark \ref{remark_codiffusor1}. ${}\hfill$
\end{proof}

\section{A novel formulation of the martingale problem via second order geometry}\label{section_martingale}

It is  well-known that the martingale problem approach, due to Stroock and Varadhan (\cite{Stroock1979}), represents a modern and fruitful way to introduce diffusion processes, alternative to the classical definition as SDEs solutions.
For a complete exposition of the topic see \cite{Watanabe1981,Stroock1979}  (\cite{Elworthy1982,Hsu2002} for the manifolds setting). In the following we call $X$ a semimartingale starting at time $s \in \mathbb{R}$ if $X_{s+t}$ is a semimartingale (starting at time 0).

\begin{definition}
A semimartingale $ D $ on $ M $ starting at time $s$ is a solution to the martingale
problem associated with a diffusor $ L $ up to a stopping time $\tau>s$ if, $\forall g \in \cinf
(N) $, the real semimartingale $ D^g $ given by
$$D^g_t=g(\overline{D}_{t \wedge \tau})-g(\overline{D}_s)-\int_s^{t \wedge \tau}{L(g)(\overline{D}_{r})dr},$$
is a local martingale (starting at s). A semimartingale solution to the martingale problem associated with a diffusor $ L $ is called a
\emph{diffusion process} (or simply a diffusion) of diffusor $L$.
\end{definition}

When not strictly necessary, we omit the stopping time $\tau$ from the definition of solution to a martingale problem.
Furthermore, unless otherwise stated,  we consider  the solution to the martingale problem starting at $0$.\\

The diffusor $ L  $ is \emph{standard} if, whenever
$ g \in \cinf (N) $ depends only on  $ t $,
$${L}(g)(t)=\frac{dg}{dt}(t).$$

\begin{remark}\label{remark1}
If $ X $ is a continuous local martingale of bounded variation such that $ X_0 = 0 $, then, by martingale property,  $X_t = 0 $ for every $ t \in \mathbb{R}_+ $
 (see, e.g. \cite{Williams1987}).
\end{remark}

The next result shows that our definition of standard diffusor is a natural requirement.
\begin{proposition}\label{proposition_symmetry1}
A diffusor $L$ is standard if there exists a diffusion $ D $ of diffusor $ L $.
\end{proposition}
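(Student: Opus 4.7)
The plan is to test the martingale problem against smooth functions $g\in\cinf(N)$ that depend only on the time coordinate $t$, and exploit the rigidity that results. For any such $g$ we have $g(\overline{D}_t)=g(t)$, so the semimartingale
$$D^g_t = g(t\wedge\tau) - g(0) - \int_0^{t\wedge\tau} L(g)(\overline{D}_r)\, dr$$
decomposes as a sum of two processes with paths of bounded variation: the smooth term $g(t\wedge\tau)-g(0)$ (smooth composed with a monotone BV process) and the Lebesgue integral $\int_0^{t\wedge\tau} L(g)(\overline{D}_r)\,dr$, whose integrand is continuous in $r$. Since by hypothesis $D^g$ is a continuous local martingale starting at $0$, Remark \ref{remark1} forces $D^g\equiv 0$.

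The next step is to rearrange the resulting identity $g(t\wedge\tau)-g(0)=\int_0^{t\wedge\tau} L(g)(\overline{D}_r)\,dr$ and differentiate in $t$; by continuity of $L(g)$ and of $D$ one obtains
$$L(g)(\overline{D}_t) = g'(t) \qquad \forall\, t\in[0,\tau),\ \text{a.s.}$$
Specialising to $t=0$ gives $L(g)(0,D_0)=g'(0)$ for every smooth one-variable $g$, which is the standardness identity at the initial point of $D$. Interpreted in the natural Markov-theoretic sense of the martingale problem (diffusions of $L$ can be started from any initial point $(s,x)\in N$), the same argument applied to each such starting point yields $L(g)(s,x)=g'(s)$ pointwise on $N$, which is exactly the standardness condition.

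The conceptual heart is that a function depending only on $t$ contributes only BV paths to $D^g$, leaving no room for a genuine martingale component; the identity of $L(g)$ with $g'$ is then forced by Remark \ref{remark1}. The main (and mild) obstacle is interpretative rather than technical: passing from the identity along the random path $\overline{D}_\cdot$ to a pointwise identity on all of $N$. This step is resolved by invoking the standing convention that diffusions of a diffusor exist from every initial condition — a natural assumption in the martingale-problem setting — so that the argument can be rerun at an arbitrary base point $(s,x)$.
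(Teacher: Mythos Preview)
Your argument follows essentially the same route as the paper: test against $g\in\cinf(N)$ depending only on $t$, note that $D^g$ is then a continuous local martingale of bounded variation, invoke Remark~\ref{remark1} to conclude $D^g\equiv 0$, and differentiate. The one place you go beyond the paper is in your final paragraph: having obtained $g'(t)=L(g)(\overline{D}_t)$ along the trajectory, the paper simply asserts that this \virgolette{means that $L(g)=dg/dt$} without further justification, whereas you explicitly pass through arbitrary initial conditions to upgrade the path identity to a pointwise one on $N$. Your patch is natural, but be aware that it imports the later \emph{good diffusor} hypothesis (Definition~\ref{defintion_usual}) rather than the bare single-diffusion assumption actually stated in the proposition; in this respect you are more scrupulous than the paper, but at the cost of a slightly stronger hypothesis.
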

\begin{proof}
If  $ g \in \cinf (N) $ depends only on $t$, considering $\overline{D}_t=(t, D_t)$, we have that
$$D^g_t=g(\overline{D}_t)-g(\overline{D}_0)-\int_0^t{L(g)(\overline{D}_s)ds}
=g(t)-g(0)-\int_0^t{L(g)(\overline{D}_s)ds},$$
is a local martingale.\\
Being $ g (t) -g (0) $  and $ \int {L (g) (\overline {D} _s) ds} $  bounded variation processes,  $ D ^ g $ is a bounded variation local martingale and,  by Remark \ref{remark1}, $ D ^ g_t = 0 $, which implies that
$ g (t) -g (0) = \int {L (g) (\overline {D} _s) ds} $. By differentiating  both sides of the latter equality with respect to $t$, we get
$$\frac{dg}{dt}(t)=L(g)(\overline{D}_t),$$
which means that $L (g) = dg / dt $, i.e. $ L $ is standard.
${}\hfill$\end{proof}

\medskip

In the following we  associate with each martingale problem a well-defined module of codiffusors and
we prove that this  module is actually completely equivalent to the martingale problem.
We start with a preliminary lemma.

\begin{lemma}\label{lemma_symmetry2}
Let $L$ be a standard diffusor and $ D  $ be a diffusion of diffusor $L$. For any  $\mu \in S (\tau ^ * N) $ we consider the codiffusor  $ \lambda = \mu - \langle \mu, L \rangle d^2t $. Then $ \int {\langle \lambda , dD_t \rangle} $ is a local martingale.
\end{lemma}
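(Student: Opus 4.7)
The plan is to reduce to codiffusors of the form $\mu = g\, d^2 f$ via Theorem \ref{theorem_codiffusor1}, which writes any $\mu \in S(\tau^* N)$ as $\sum_i g_i\, d^2 f_i$ with $g_i, f_i \in \cinf(N)$. Since both the pairing $\langle \cdot, L\rangle$ and the It\^o integral $\int\langle \cdot, d\overline{D}\rangle$ are $\cinf(N)$-linear in their first argument (the latter by the second property of Theorem \ref{theorem_semimartingale3}), it suffices by linearity to prove the claim for $\mu = g\, d^2 f$; the general case follows by summing.

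For such $\mu$ one has $\langle \mu, L\rangle = g\, L(f)$, hence $\lambda = g\, d^2 f - g\, L(f)\, d^2 t$. The key step is to expand the two corresponding It\^o integrals separately. For the $g\, d^2 f$ part, the first property of Theorem \ref{theorem_semimartingale3} identifies $\int\langle d^2 f, d\overline{D}\rangle_t$ with $f(\overline{D}_t)-f(\overline{D}_0)$; the martingale problem decomposes this as $M^f_t + \int_0^t L(f)(\overline{D}_s)\,ds$ with $M^f$ a local martingale, and pulling $g$ inside via the second property yields
\[
\int\langle g\, d^2 f, d\overline{D}\rangle_t \;=\; \int_0^t g(\overline{D}_s)\,dM^f_s \;+\; \int_0^t g(\overline{D}_s)\,L(f)(\overline{D}_s)\,ds.
\]
For the $g\, L(f)\, d^2 t$ part, since the first coordinate of the lift $\overline{D}_s = (s, D_s)$ is exactly $s$, the first property of Theorem \ref{theorem_semimartingale3} applied to $d^2 t$ gives $\int\langle d^2 t, d\overline{D}\rangle_s = s$, so
\[
\int\langle g\, L(f)\, d^2 t, d\overline{D}\rangle_t \;=\; \int_0^t g(\overline{D}_s)\,L(f)(\overline{D}_s)\,ds.
\]
Subtracting, the bounded variation drifts cancel exactly, leaving $\int\langle \lambda, d\overline{D}\rangle_t = \int_0^t g(\overline{D}_s)\,dM^f_s$, which is a local martingale as an It\^o integral against the local martingale $M^f$.

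The hard part is conceptual rather than computational: one must see that subtracting $\langle \mu, L\rangle\, d^2 t$ from $\mu$ is precisely what annihilates the compensator produced by the martingale problem, so that $\int\langle \lambda, d\overline{D}\rangle$ retains only the martingale portion of $\int\langle \mu, d\overline{D}\rangle$. The standard hypothesis on $L$ (forcing $L(t)=1$) ensures the cancellation is internally consistent with the identification $\int\langle d^2 t, d\overline{D}\rangle_s = s$, and by Proposition \ref{proposition_symmetry1} this is automatic as soon as a diffusion of diffusor $L$ exists.
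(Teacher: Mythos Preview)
Your proof is correct and follows essentially the same route as the paper's: both reduce via Theorem \ref{theorem_codiffusor1} to terms of the form $g\,d^2 f$, use the martingale problem to identify the local martingale part of $\int\langle d^2 f, d\overline{D}\rangle$, and then invoke the multiplicative property of Theorem \ref{theorem_semimartingale3} to handle the coefficient $g$. The only cosmetic difference is that the paper first groups $\lambda_i = d^2 f_i - L(f_i)\,d^2 t$ and observes $\lambda = \sum_i g_i \lambda_i$ before integrating, whereas you integrate the two summands of $\lambda$ separately and then subtract; the underlying computation is identical (your $M^f$ is precisely the paper's $\int\langle \lambda_i, dD_s\rangle$).
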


\begin{proof}
If $\mu=d^2h$ with  $ h \in \cinf (N) $  the lemma reduces to the definition of a diffusion of diffusor $ L $. \\
If $ \mu $ is a generic codiffusor, by Theorem \ref{theorem_codiffusor1} there exist  $ f_i, g_i \in \cinf (N) $ such that
$$\mu=\sum_i g_i d^2f_i.$$
If we consider $ \lambda_i = d ^ 2f_i-L (f_i)  d^2t$, we have that $ \int {\langle \lambda_i, dD_t \rangle} $ is a local martingale and, being
$$\lambda=\mu-\langle \mu, L\rangle d^2t=\sum_i g_i\lambda_i,$$
we find
\begin{eqnarray*}
\int{\langle \lambda, dD_t\rangle}&=&\sum_i\int{\langle g_i \lambda_i,dD_t\rangle}
\ = \ \sum_i\int{g_i(\overline{D}_t)d\left(\int{\langle \lambda_i,dD_s\rangle}\right)_t}.
\end{eqnarray*}
The latter integral is a local martingale, being a sum of It\^o integrals along the real local martingales $ \int {\langle \lambda_i, dD_s \rangle} $.
${}\hfill$\end{proof}

\medskip

We recall that $ \Lambda_L \subset S (\tau ^ * N) $ denotes the \emph{annihilator} of the one-dimensional module $\mathfrak{L}_L$ generated by the diffusor $ L \in S (\tau N) $.

\begin{theorem}\label{theorem_symmetry1}
The semimartingale $ D $ on $ M $ is a diffusion of standard diffusor $ L $, if and only if, for every $ \lambda \in \Lambda_L $,
$$\int{\langle \lambda, dD_t\rangle}$$
is a local martingale.
\end{theorem}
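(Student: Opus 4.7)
The plan is to obtain both directions of the equivalence essentially for free from Lemma \ref{lemma_symmetry2}, together with the observation that standardness of $L$ amounts to $\langle d^2t, L\rangle = L(t) = 1$.

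For the forward direction, suppose $D$ is a diffusion of diffusor $L$ and fix an arbitrary $\lambda \in \Lambda_L$. By definition of the annihilator, $\langle \lambda, L\rangle = 0$. Applying Lemma \ref{lemma_symmetry2} with the choice $\mu = \lambda$, the correction codiffusor $\mu - \langle \mu, L\rangle d^2t$ collapses to $\lambda$ itself, so the lemma directly yields that $\int \langle \lambda, dD_t\rangle$ is a local martingale.

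For the reverse direction, suppose that $\int \langle \lambda, dD_t\rangle$ is a local martingale for every $\lambda \in \Lambda_L$; I want to show that for each $g \in \cinf(N)$ the semimartingale $D^g$ is a local martingale. I would introduce the codiffusor
$$\lambda_g := d^2g - L(g)\,d^2t \in S(\tau^*N).$$
Using standardness of $L$ (which gives $\langle d^2t, L\rangle = 1$), a direct computation shows $\langle \lambda_g, L\rangle = L(g) - L(g)\cdot 1 = 0$, hence $\lambda_g \in \Lambda_L$. By hypothesis, $\int \langle \lambda_g, dD_t\rangle$ is a local martingale. It then remains to identify this integral with $D^g$. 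Expanding by linearity and applying the two defining properties of It\^o integration (Theorem \ref{theorem_semimartingale3}): the first property gives $\int \langle d^2g(\overline{D}_t), d\overline{D}_t\rangle = g(\overline{D}) - g(\overline{D}_0)$, while the product rule combined with $\int \langle d^2t, d\overline{D}_r\rangle_s = t(\overline{D}_s) - t(\overline{D}_0) = s$ gives
$$\int \bigl\langle L(g)\,d^2t,\, d\overline{D}_t\bigr\rangle = \int_0^{\cdot} L(g)(\overline{D}_s)\, ds.$$
Hence $\int \langle \lambda_g, dD_t\rangle = D^g$, which is therefore a local martingale.

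The only delicate point, and the main bookkeeping obstacle, is keeping track of the extended notation $\int \langle \lambda, dD_t\rangle := \int \langle \lambda(\overline{D}_t), d\overline{D}_t\rangle$ for codiffusors on $N$, so that the product-rule expansion produces the ordinary Lebesgue integral $\int_0^\cdot L(g)(\overline{D}_s)\,ds$ required in the definition of $D^g$. Once this identification is carried out, both implications are immediate consequences of Lemma \ref{lemma_symmetry2} and the standardness assumption.
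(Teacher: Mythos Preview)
Your proof is correct and follows essentially the same route as the paper: both directions hinge on Lemma~\ref{lemma_symmetry2} for the forward implication and on the explicit codiffusor $\lambda_g = d^2g - L(g)\,d^2t \in \Lambda_L$ for the converse, with standardness supplying $\langle d^2t, L\rangle = 1$. Your write-up is slightly more explicit than the paper's in spelling out, via Theorem~\ref{theorem_semimartingale3}, why $\int\langle \lambda_g, dD_t\rangle$ coincides with $D^g$, but the argument is the same.
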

\begin{proof}
By Lemma \ref{lemma_symmetry2}, if $ D $ is a diffusion of standard diffusor $ L $ and $ \lambda \in \Lambda_L $, then $ \int {\langle \lambda, dD_t \rangle} $ is a local martingale. Indeed we know that $ \lambda -\langle \lambda, L \rangle  d ^ 2t$ integrated along $ D $ is a local martingale and that $ \langle \lambda, L \rangle = 0 $ because $ \lambda \in \Lambda_L $.\\
Conversely, suppose  that the semimartingale $ D $ is such that, $\forall \lambda \in \Lambda_L $,  $ \int {\langle \lambda, dD_t \rangle} $ is a local martingale.
Given  $ g \in \cinf (N) $, we have $ \lambda = d ^ 2g-L (g) d ^ 2t \in \Lambda_L $,  being $\langle d^2t, L \rangle =1$ and $\langle d^2g, L \rangle =L(g)$. Hence
$$\int \langle (d^2 g-L(g)d^2t),dD_t \rangle = g(\overline{D})-g(\overline{D}_0)-\int L(g)(\overline{D}_t)dt$$
is a local martingale. Since $g$ is a generic function in $\cinf(N)$, then $D$ is a diffusion of diffusor $L$.\\
${}\hfill$\end{proof}

In Lemma \ref{lemma_symmetry2} and Theorem \ref{theorem_symmetry1} we have implicitly assumed that the stopping time $\tau$ of the diffusion $D$
is equal to $+ \infty$. The general case can be recovered by using Proposition \ref{proposition_semimartingale2}.\\
In order to  prove a sort of converse of Theorem \ref{theorem_symmetry1}, since we do not need the uniqueness of the solution to the martingale problem, instead of the well-posedness notion we introduce  the following definition.\\

\begin{definition}\label{defintion_usual}
A diffusor $L$ is a \emph{good} diffusor if, for any $t_0 \in \mathbb{R}$ and $x_0 \in M$,  there exists at least one diffusion $D$, starting
at $t_0$ and such that $D_{t_0}=x_0$ almost surely, solution to the martingale problem associated with $L$.
\end{definition}



\begin{proposition}\label{proposition_symmetry2}
If $L$ is  a good diffusor,  then
$$\Lambda^{\prime}:=\left\{ \lambda \in S(\tau^*N) \left| \int{\langle \lambda,dD_t\rangle} \text{ is a local martingale } \right. \right\}\subseteq \Lambda_L$$
\end{proposition}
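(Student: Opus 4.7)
The plan is to prove the proposition by contraposition: I will assume $\lambda \notin \Lambda_L$ and exhibit a diffusion $D$ of diffusor $L$ along which $\int \langle \lambda, dD_t \rangle$ fails to be a local martingale. Goodness of $L$ is what allows me to produce such a diffusion starting exactly where $\langle \lambda, L \rangle$ is nonzero.

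Concretely, if $\lambda \notin \Lambda_L$, then $\langle \lambda, L \rangle$ is a smooth function on $N$ not identically zero, so there exists $(t_0, x_0) \in N$ with $c := \langle \lambda, L \rangle(t_0, x_0) \neq 0$. By the hypothesis that $L$ is good, I can pick a diffusion $D$ of $L$ starting at time $t_0$ with $D_{t_0} = x_0$ almost surely. Now I apply Lemma \ref{lemma_symmetry2} to $\mu := \lambda$: the codiffusor $\lambda - \langle \lambda, L \rangle d^2 t$ integrates against $D$ to a local martingale. If $\lambda$ were in $\Lambda'$, then $\int \langle \lambda, dD_t \rangle$ would also be a local martingale, and by linearity the difference
$$
\int \langle \langle \lambda, L \rangle \, d^2 t, \, dD_t \rangle
$$
would be a local martingale as well.

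The next step is to identify this difference as an ordinary time integral. Using the characterization of It\^o integration on $N$ in Theorem \ref{theorem_semimartingale3} applied to $f(t,x)=t$ (so that $\int \langle d^2 t, d\overline{D}_r \rangle_t = t - t_0$) together with the second defining property of the integral (the multiplication rule), one obtains
$$
\int \langle \langle \lambda, L \rangle \, d^2 t, \, dD_t \rangle_{t} = \int_{t_0}^{t} \langle \lambda, L \rangle(\overline{D}_s)\, ds,
$$
which is a continuous bounded variation process starting at $0$. By Remark \ref{remark1}, any such local martingale is identically zero, and hence $\int_{t_0}^{t} \langle \lambda, L \rangle(\overline{D}_s)\, ds = 0$ almost surely for every $t \geq t_0$.

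Finally, dividing by $h$ and letting $h \to 0^+$, continuity of $\langle \lambda, L \rangle$ and of the path $s \mapsto \overline{D}_s$ gives
$$
0 = \lim_{h\to 0^+} \frac{1}{h}\int_{t_0}^{t_0+h} \langle \lambda, L \rangle(\overline{D}_s)\, ds = \langle \lambda, L \rangle(t_0, x_0) = c \neq 0,
$$
a contradiction. Therefore $\lambda \in \Lambda_L$, which is what we wanted. The only delicate point is the identification of $\int \langle f\, d^2 t, dD_t \rangle$ with $\int f(\overline{D}_s)\, ds$, but this is a routine consequence of the two defining properties of the It\^o integral in Theorem \ref{theorem_semimartingale3}; everything else is formal once a diffusion through $(t_0, x_0)$ is available, which is precisely what goodness guarantees.
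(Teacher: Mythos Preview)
Your argument is correct and is essentially the paper's own proof, only recast in contrapositive form: both use Lemma~\ref{lemma_symmetry2} to see that $\langle\lambda,L\rangle\,d^2t$ integrates to a local martingale along a diffusion starting at an arbitrary (respectively, well-chosen) point $(t_0,x_0)$, identify that integral with the bounded-variation process $\int_{t_0}^{t}\langle\lambda,L\rangle(\overline D_s)\,ds$, invoke Remark~\ref{remark1} to conclude it vanishes, and then pass to the limit $t\to t_0$ by continuity. The only difference is that the paper runs the direct version (show $\langle\lambda,L\rangle(t_0,x_0)=0$ for every $(t_0,x_0)$) whereas you fix a point where $\langle\lambda,L\rangle\neq 0$ and reach a contradiction.
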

\begin{proof}
Given $ \lambda \in \Lambda '$, by Lemma \ref{lemma_symmetry2}, $ \lambda - \langle \lambda, L \rangle  d^2t \in \Lambda '$ and, being $ \Lambda '$ closed with respect to the sum, we have
$$\langle \lambda,L\rangle d^2t=\lambda-(\lambda-\langle \lambda,L\rangle d^2t) \in \Lambda'.$$
Let $D^{x_0,t_0}$ be a diffusion starting at $t_0$ such that $D^{x_0,t_0}_{t_0}=x_0$. The integral
$$\int{\langle (\langle \lambda,L\rangle d^2t),dD^{x_0,t_0}_t\rangle}=\int{(\langle \lambda,L\rangle)(\overline{D}^{x_0,t_0}_t)dt}$$
is a  bounded variation process and also a local martingale and,
by Remark \ref{remark1},
$$
\int_{t_0}^r{(\langle \lambda,L\rangle)(\overline{D}^{x_0,t_0}_t)dt}=0
$$
for any $r > t_0$.
Since $(\langle \lambda,L\rangle)(\overline{D}^{x_0,t_0}_t)$ is continuous with respect to $t$, we  have that $(\langle \lambda,L\rangle)(\overline{D}^{x_0,t_0}_t)=0$ and, considering the limit for $t \rightarrow t_0$ in the previous expression, we get $(\langle \lambda,L\rangle)(t_0,x_0)=0$. Since $x_0 \in M$ and $t_0 \in \mathbb{R}$ are generic points the proposition is proved.
${}\hfill$\end{proof}

\begin{corollary}
If $L$ is a good diffusor, then
$$\Lambda_L=\left\{ \lambda \in S(\tau^*N) \left| \int{\langle \lambda,dD_t\rangle} \text{ is a local martingale } \right. \right\}.$$
\end{corollary}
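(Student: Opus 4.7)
The plan is to show the corollary by combining the two preceding results: Proposition \ref{proposition_symmetry2} (the nontrivial inclusion) and Theorem \ref{theorem_symmetry1} (the converse), after first observing that a good diffusor is automatically standard.

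First I would note that, since by hypothesis $L$ is a good diffusor, there exists at least one diffusion $D$ of diffusor $L$, so by Proposition \ref{proposition_symmetry1} the diffusor $L$ is standard. This puts us in the setting where Theorem \ref{theorem_symmetry1} applies. The corollary is naturally interpreted as an equality of sets of codiffusors characterized by the local-martingale property against every diffusion $D$ of $L$ (the quantifier implicit in the set-builder notation matching the one used in Proposition \ref{proposition_symmetry2}).

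For the inclusion $\Lambda_L\subseteq \{\lambda\in S(\tau^*N)\mid\int\langle\lambda,dD_t\rangle\text{ is a local martingale}\}$, I would take any $\lambda\in\Lambda_L$ and any diffusion $D$ of $L$. Since $L$ is standard, the forward direction of Theorem \ref{theorem_symmetry1} immediately yields that $\int\langle\lambda,dD_t\rangle$ is a local martingale, so $\lambda$ lies in the right-hand set. For the reverse inclusion, the statement is exactly Proposition \ref{proposition_symmetry2}, whose hypothesis is precisely that $L$ be a good diffusor; no additional argument is needed.

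I do not expect any serious obstacle here, since all the substantive work was carried out in Proposition \ref{proposition_symmetry2}, where the existence of a diffusion starting from every initial condition $(t_0,x_0)$ was used to deduce the pointwise vanishing of $\langle\lambda,L\rangle$. The only subtlety worth flagging is the need to justify the passage from ``$L$ admits a diffusion'' (hence standard, by Proposition \ref{proposition_symmetry1}) to the applicability of Theorem \ref{theorem_symmetry1}, and to be explicit that the set on the right-hand side of the corollary refers to codiffusors whose integral is a local martingale along every diffusion of $L$, matching the notation fixed in Proposition \ref{proposition_symmetry2}.
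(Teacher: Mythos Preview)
Your proposal is correct and matches the paper's intended argument: the corollary is stated without proof precisely because it follows at once by combining Theorem \ref{theorem_symmetry1} (giving $\Lambda_L\subseteq\Lambda'$ once $L$ is standard) with Proposition \ref{proposition_symmetry2} (giving $\Lambda'\subseteq\Lambda_L$ for good $L$). Your observation that a good diffusor is standard via Proposition \ref{proposition_symmetry1} is exactly the small link needed to make Theorem \ref{theorem_symmetry1} applicable.
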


In the following  we always consider  good diffusors $L$. This choice is not restrictive since, using the stopping time $\tau$ and our definition of solution
to the martingale problem, we can exploit all existence results for diffusion processes in $\mathbb{R}^n$ (see \cite{Stroock1979}).

\section{Symmetries of diffusions}\label{section_symmetries}

Generalizing the natural idea of symmetries of ODEs as diffeomorphisms transforming solutions into solutions, we  give the following definition.

\begin{definition}
Let $ \overline {\Phi}: N \rightarrow N $ be an invertible semimartingale transformation. The diffeomorphism $ \overline {\Phi} = (f, \Phi) $ is
a symmetry of the diffusions associated with $ L $ (in short, a symmetry of $L$) if, for any diffusion $ D $ of diffusor $ L $,  also $ \Phi_f (D) $
is a diffusion of diffusor $ L $.
\end{definition}
The next result characterizes  symmetries of diffusions associated with a diffusor $L$ in terms of a suitable  invariance property of the module of codiffusors $\Lambda_L$.
\begin{theorem}\label{theorem_symmetry2}
An invertible semimartingale transformation $ \overline {\Phi}: N \rightarrow N $ is a  symmetry of $ L $ if and only if $\overline{\Phi}$ is a symmetry of $\Lambda_L$.
\end{theorem}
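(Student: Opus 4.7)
The plan is to reduce the statement to two transfer principles already in the paper: (1) Theorem \ref{theorem_symmetry1} and its Corollary, which translate ``being a diffusion of $L$'' into the local martingale property of $\int\langle\lambda,dD_t\rangle$ for every $\lambda\in\Lambda_L$; and (2) Theorem \ref{theorem_semimartingale6}, which expresses the integral after a semimartingale transformation as a time-change of an integral of the pulled-back codiffusor. Composing these two facts should collapse the symmetry-of-$L$ condition to an invariance condition on $\Lambda_L$.

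More concretely, for any diffusion $D$ of $L$ and any $\lambda\in S(\tau^*N)$, Theorem \ref{theorem_semimartingale6} gives
\[
\int\langle \lambda, d\Phi_f(D)_t\rangle \;=\; H_f\!\left(\int\langle \overline{\Phi}^*(\lambda),dD_t\rangle\right).
\]
The first observation I would record is that a deterministic smooth strictly increasing time change $H_f$ preserves the local martingale property (passing to the time-changed filtration). Therefore the left-hand side is a local martingale if and only if $\int\langle \overline{\Phi}^*(\lambda),dD_t\rangle$ is one.

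For the ``if'' direction, assume $\overline{\Phi}^*(\Lambda_L)=\Lambda_L$. Take a diffusion $D$ of $L$ and any $\lambda\in\Lambda_L$. Then $\overline{\Phi}^*(\lambda)\in\Lambda_L$, so by Theorem \ref{theorem_symmetry1} the integral $\int\langle \overline{\Phi}^*(\lambda),dD_t\rangle$ is a local martingale; by the displayed identity, so is $\int\langle \lambda,d\Phi_f(D)_t\rangle$. Another application of Theorem \ref{theorem_symmetry1} (to $\Phi_f(D)$, which is a semimartingale by Proposition \ref{proposition_seimaringale1}) shows $\Phi_f(D)$ is a diffusion of $L$.

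For the ``only if'' direction, assume $\Phi_f(D)$ is a diffusion of $L$ for every diffusion $D$ of $L$. Pick $\lambda\in\Lambda_L$. By Theorem \ref{theorem_symmetry1} the integral $\int\langle\lambda,d\Phi_f(D)_t\rangle$ is a local martingale, hence by the displayed identity so is $\int\langle\overline{\Phi}^*(\lambda),dD_t\rangle$. Because $L$ is good, Proposition \ref{proposition_symmetry2} (applied at every starting point $(t_0,x_0)$) forces $\langle\overline{\Phi}^*(\lambda),L\rangle\equiv 0$, that is $\overline{\Phi}^*(\lambda)\in\Lambda_L$. Thus $\overline{\Phi}^*(\Lambda_L)\subseteq\Lambda_L$, and invertibility of $\overline{\Phi}$ (applied to $\overline{\Phi}^{-1}$) yields equality.

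The main obstacle I anticipate is the invariance of the local martingale property under the deterministic time change $H_f$: one must be careful about which filtration the transformed process is a local martingale with respect to, since $\Phi_f(D)$ lives in the time-changed filtration $\mathcal{G}_{t'}=\mathcal{F}_{f^{-1}(t')}$. Once this standard fact is invoked (it is already used implicitly in the paper's time-change results), the rest of the argument is a direct assembly of Theorems \ref{theorem_symmetry1}, \ref{theorem_semimartingale6} and Proposition \ref{proposition_symmetry2}.
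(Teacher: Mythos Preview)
Your proposal is correct and follows essentially the same approach as the paper's proof: both directions hinge on the identity from Theorem \ref{theorem_semimartingale6}, combined with Theorem \ref{theorem_symmetry1} to characterize diffusions via $\Lambda_L$, and Proposition \ref{proposition_symmetry2} (using that $L$ is good) for the ``only if'' inclusion, with invertibility upgrading the inclusion to equality. Your explicit remark that $H_f$ preserves the local martingale property (with respect to the time-changed filtration) is a point the paper leaves implicit, so if anything your write-up is slightly more careful on that front.
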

\begin{proof}
Suppose that $ \overline {\Phi} $ is a  symmetry of $ L $, and let $D$ be any diffusion of diffusor $L$. Obviously, by Theorem \ref{theorem_symmetry1} and by the definition of  symmetry, $\forall \lambda \in \Lambda_L$
$$\int{\langle \lambda, d\Phi_f(D)_t\rangle}$$
is a local martingale. On the other hand, by Theorem \ref{theorem_semimartingale6} we have
$$\int{\langle \lambda, d\Phi_f(D)_t\rangle}=H_f\left(\int{\langle \overline{\Phi}^*(\lambda), dD_t\rangle}\right).$$
Since the latter equality and  Proposition \ref{proposition_symmetry2} ensure  that $\overline{\Phi}^*(\lambda) \in \Lambda_L$, then   $\overline{\Phi}^*(\Lambda_L) \subseteq \Lambda_L$. The equality follows from the invertibility of $\overline{\Phi}$.\\
Conversely, suppose that $\overline{\Phi}^*(\Lambda_L)=\Lambda_L$ and let  $D$ be any diffusion of diffusor $L$. Fixing $\lambda \in \Lambda_L$, from Theorem \ref{theorem_semimartingale6} we have
$$\int{\langle \lambda, d\Phi_f(D)_t\rangle}=H_f\left(\int{\langle \overline{\Phi}^*(\lambda),dD_t\rangle}\right).$$
Since $\overline{\Phi}^*(\lambda) \in \Lambda_L$, the right-hand side of the last equality is a local martingale. Then, by Theorem \ref{theorem_symmetry1}, $\Phi_f(D)$ is a diffusion of diffusor $L$.
${}\hfill$\end{proof}

In order to provide a simpler characterization of  symmetries of $L$ we give the following lemma.

\begin{lemma}\label{lemma_symmetry3}
Let $ L $ be a standard diffusor. If there exists a diffusor  $ L '$ such that, $\forall \lambda \in \Lambda_L $,
$ \langle \lambda, L '\rangle = 0 $,
then there exists $ \mu \in \cinf (N) $ such that $ L '= \mu L $.
\end{lemma}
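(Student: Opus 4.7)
The plan is to construct $\mu$ explicitly using the distinguished codiffusor $d^2 t$, exploiting the standardness hypothesis which says $\langle d^2 t, L\rangle = L(t) = 1$, and then use this to split an arbitrary codiffusor into a $\Lambda_L$-part and a $d^2 t$-part.

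First I would observe that, since $L$ is standard, we have $\langle d^2 t, L\rangle = 1$ identically on $N$, so in particular $L(P)\neq 0$ at every point $P\in N$; this ensures $\mathfrak{L}_L$ really is a rank-one module and $\Lambda_L$ really is its annihilator of codimension one. Then I would \emph{define} the candidate coefficient
\begin{equation*}
\mu := \langle d^2 t, L'\rangle = L'(t)\in \cinf(N),
\end{equation*}
which is smooth because $L'$ acts as a differential operator on $\cinf(N)$ (Theorem \ref{theorem_codiffusor2}) and $t$ is a smooth coordinate on $N$.

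Next, for an arbitrary codiffusor $\omega\in S(\tau^* N)$, I would form
\begin{equation*}
\lambda := \omega - \langle \omega, L\rangle\, d^2 t,
\end{equation*}
which is a well-defined global section of $\tau^* N$ because $\langle \omega, L\rangle\in \cinf(N)$. A direct check using $\langle d^2 t, L\rangle = 1$ gives $\langle \lambda, L\rangle = 0$, hence $\lambda\in\Lambda_L$. Applying the hypothesis, $\langle \lambda, L'\rangle = 0$, which rearranges to
\begin{equation*}
\langle \omega, L'\rangle = \langle \omega, L\rangle\, \langle d^2 t, L'\rangle = \mu\, \langle \omega, L\rangle = \langle \omega, \mu L\rangle.
\end{equation*}

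Finally, since the duality pairing between $S(\tau^* N)$ and $S(\tau N)$ is non-degenerate (indeed codiffusors separate diffusors pointwise, by the explicit local basis $\{d^2 x^i, dx^i\cdot dx^j\}$ dual to $\{\partial_{x^i}, \partial_{x^ix^j}\}$), the identity $\langle \omega, L'-\mu L\rangle = 0$ for every $\omega\in S(\tau^* N)$ forces $L' = \mu L$, completing the proof. I do not expect any real obstacle: the only subtle point is the splitting $\omega = \lambda + \langle \omega, L\rangle d^2 t$, which works precisely because standardness of $L$ makes $d^2 t$ a global \emph{transversal} to $\Lambda_L$ inside $S(\tau^* N)$.
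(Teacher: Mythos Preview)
Your proof is correct and follows essentially the same approach as the paper: both define $\mu = L'(t)$ and exploit the splitting $\omega = (\omega - \langle\omega,L\rangle\,d^2t) + \langle\omega,L\rangle\,d^2t$ into a $\Lambda_L$-part plus a $d^2t$-part, which is exactly the ``transversal'' observation you highlight. The only cosmetic difference is that the paper sets $\widetilde{L}=L'-L'(t)L$ up front and tests it only against codiffusors of the form $d^2g$ (then invokes Theorem~\ref{theorem_codiffusor2}), whereas you test $L'-\mu L$ against an arbitrary codiffusor and invoke non-degeneracy of the pairing directly; these are equivalent.
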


\begin{proof}
Let us consider  $\widetilde{L}=L'-L'(t) L$:  we show that $\widetilde{L}=0$ proving  that, $\forall  g \in \cinf(N)$,  $\widetilde L(g)=0$.
Since $L$ is standard,  $L(t)=\langle d^2t, L\rangle =1$ and
\begin{eqnarray*}
\widetilde{L}(t)&=&\langle d^2t,L'\rangle- L'(t) \langle d^2t, L\rangle\\
&=&L'(t)-L'(t)L(t)=0.
\end{eqnarray*}
Obviously, if $\lambda \in \Lambda_L$, then $\langle \lambda , \widetilde{L}\rangle =0$. So, if $g \in \cinf(M)$, then $\lambda = d^2g- L(g)d^2t \in \Lambda_L$. Therefore
\begin{eqnarray*}
\widetilde{L}(g)&=&\langle (\lambda+L(g)d^2t),\widetilde{L}\rangle\\
&=&\langle \lambda ,\widetilde{L}\rangle +L(g)\widetilde{L}(t)=0, \end{eqnarray*}
and, by  Theorem \ref{theorem_codiffusor2},  the last statement is equivalent to $\widetilde{L}=0$.
${}\hfill$\end{proof}

With the notations and the hypothesis of  Theorem \ref{theorem_symmetry2} , $ \overline {\Phi} $ is a symmetry of $L$ if and only if
$ \overline {\Phi} ^ * (L) = \mu L $, for some $ \mu \in \cinf (M) $ such that $ \mu \not = 0 $.

\begin{definition}\label{definition_symmetry1}
Let $X$ be a complete vector field on $N$   with corresponding flow $\overline{\Phi}_a$.The vector field $X$  is  an \emph{infinitesimal symmetry}
for the diffusions  associated with a diffusor $L$ (in short an infinitesimal symmetry for $L$) if, $\forall a \in \mathbb{R}$,  $\overline{\Phi}_a$ is a
symmetry of the diffusor $L$.
\end{definition}

\begin{remark}\label{remark_symmetry1}
A necessary condition for $X$ to be an infinitesimal symmetry of a diffusor $L$  is that the flow $ \overline {\Phi} _a $ is a one-parameter group of invertible semimartingale transformations.
This is equivalent to require that $X$ is \emph{projectable}, i.e. the vector field $X$ is of the form $X=\phi^i \partial_{x^i}+\tau \partial_t$, where the function $\tau$ depends only on $t$.
\end{remark}

\begin{theorem}\label{theorem_symmetry4}
A projectable complete vector field $X$ is an infinitesimal symmetry of a standard diffusor  $ L $  if and only if $X$ is a symmetry of $\Lambda_L$, i.e.
\begin{equation}\label{equation_lambda}
\mathcal{L}_X(\Lambda_L) \subseteq \Lambda_L.
\end{equation}
\end{theorem}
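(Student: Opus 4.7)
The plan is to reduce this statement to a composition of two earlier results: Theorem \ref{theorem_symmetry2}, which identifies symmetries of the diffusor $L$ with symmetries of the module $\Lambda_L$, and Theorem \ref{theorem_codiffusor6}, which gives the infinitesimal version of module symmetry in terms of the Lie derivative. Since the hypothesis that $X$ is projectable ensures, via Remark \ref{remark_symmetry1}, that its flow $\overline{\Phi}_a$ is a one-parameter family of invertible semimartingale transformations, we are in the right setting to apply Theorem \ref{theorem_symmetry2} for each $a \in \mathbb{R}$.

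First I would unfold Definition \ref{definition_symmetry1}: $X$ is an infinitesimal symmetry of $L$ precisely when, for every $a \in \mathbb{R}$, the diffeomorphism $\overline{\Phi}_a$ is a symmetry of the diffusor $L$. Applying Theorem \ref{theorem_symmetry2} to each such $\overline{\Phi}_a$, this is equivalent to $\overline{\Phi}_a^*(\Lambda_L) = \Lambda_L$ for every $a$, or equivalently $\overline{\Phi}_{a,*}(\Lambda_L) = \Lambda_L$ for every $a$. But this last condition is, by the definition of infinitesimal symmetry of a module given just before Theorem \ref{theorem_codiffusor6}, exactly the statement that $X$ is an infinitesimal symmetry of the module $\Lambda_L$. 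Invoking Theorem \ref{theorem_codiffusor6} (applied to $M$ replaced by $N$ and to the module $\Gamma = \Lambda_L$) then yields the equivalence with $\mathcal{L}_X(\Lambda_L) \subseteq \Lambda_L$, which is \refeqn{equation_lambda}.

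There is essentially no computational obstacle here since both invoked theorems are available. The only point worth being careful about is that Theorem \ref{theorem_codiffusor6} was stated for vector fields $X$ on a manifold $M$ and a module $\Gamma$ on that manifold; here we apply it on $N = \mathbb{R}\times M$ with $\Gamma = \Lambda_L \subset S(\tau^* N)$, and since $X$ is projectable and complete on $N$, its flow $\overline{\Phi}_a$ is a global diffeomorphism of $N$, so the hypotheses of that theorem are satisfied verbatim. I would therefore present the proof as two lines of equivalences referencing Theorem \ref{theorem_symmetry2} and Theorem \ref{theorem_codiffusor6} respectively, with a brief remark reminding the reader that projectability of $X$ is exactly what allows each $\overline{\Phi}_a$ to qualify as a semimartingale transformation in the sense required by Theorem \ref{theorem_symmetry2}.
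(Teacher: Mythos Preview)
Your proposal is correct and follows essentially the same approach as the paper: the paper's proof simply invokes Remark \ref{remark_symmetry1} for the projectability requirement and then cites Theorem \ref{theorem_codiffusor6} together with Theorem \ref{theorem_symmetry2} as providing the equivalence, noting in passing that $\Lambda_L$ is a $k$-dimensional module with $k=\rank(\tau^*N)-1$ so that Theorem \ref{theorem_codiffusor6} applies. Your write-up spells out the chain of equivalences more explicitly, but the logical content is identical.
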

\begin{proof}
The necessity (and the sufficiency) of the existence of the flow and of the projectability of $X$ are explained in   Remark \ref{remark_symmetry1}.\\
Besides, since $\Lambda_L$ is a $k$-dimensional  module (with $k=\rank(\tau^*N)-1$) the necessity and sufficiency of  condition \refeqn{equation_lambda} are  simple consequences of Theorem \ref{theorem_codiffusor6} and Theorem \ref{theorem_symmetry2}.
${}\hfill$
\end{proof}

The following proposition provides a very useful  condition, ensuring that a complete vector field is a symmetry of a diffusion  $L$.
\begin{proposition}\label{proposition_symmetry4}
Let $X$ be a projectable complete vector field and $L$ be a standard diffusor. Then $ \mathcal {L} _X (\Lambda_L) \subseteq \Lambda_L $ if and only if there exists
$ \mu \in \cinf (N) $ such that
\begin{equation}\label{equation_lambda2}
\mathcal {L} _X (L) = \mu L.
\end{equation}
\end{proposition}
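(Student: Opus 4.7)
The proposition links an invariance condition on the annihilator module $\Lambda_L$ of codiffusors with a rescaling condition on the diffusor $L$ itself. The plan is to exploit two structural results already in the paper: the Leibniz-type rule for the Lie derivative of a pairing (item 3 of Theorem \ref{theorem_codiffusor5}) and Lemma \ref{lemma_symmetry3}, which characterizes those diffusors annihilated by every element of $\Lambda_L$ as scalar multiples of $L$ whenever $L$ is standard. Both directions will reduce to a short algebraic manipulation of the identity
\begin{equation*}
\mathcal{L}_X(\langle \lambda, L\rangle) = \langle \mathcal{L}_X(\lambda), L\rangle + \langle \lambda, \mathcal{L}_X(L)\rangle.
\end{equation*}

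For the implication $\mathcal{L}_X(L)=\mu L \Rightarrow \mathcal{L}_X(\Lambda_L)\subseteq \Lambda_L$, I would take an arbitrary $\lambda \in \Lambda_L$. Since $\langle \lambda, L\rangle = 0$, the left-hand side of the identity above vanishes, while the second term on the right reduces to $\mu\langle \lambda, L\rangle = 0$. Therefore $\langle \mathcal{L}_X(\lambda), L\rangle = 0$, meaning $\mathcal{L}_X(\lambda) \in \Lambda_L$.

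For the converse, I would again start from an arbitrary $\lambda \in \Lambda_L$. Both $\langle \lambda, L\rangle$ and (by hypothesis) $\langle \mathcal{L}_X(\lambda), L\rangle$ vanish, and so the Leibniz identity forces $\langle \lambda, \mathcal{L}_X(L)\rangle = 0$ for every $\lambda \in \Lambda_L$. Noting that $\mathcal{L}_X(L)$ is itself a diffusor on $N$ (as Lie derivatives of diffusors along vector fields are diffusors, by construction), this is exactly the hypothesis of Lemma \ref{lemma_symmetry3}. The lemma then provides $\mu \in \cinf(N)$ with $\mathcal{L}_X(L) = \mu L$, which is the desired conclusion.

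The step requiring the most care is the invocation of Lemma \ref{lemma_symmetry3}: this is precisely where the standardness of $L$ is used, since the lemma relies on $\langle d^2 t, L\rangle = 1$ in order to reconstruct the coefficient $\mu$ from $L'(t)$. The hypothesis that $X$ is projectable and complete is not really used in the argument itself; it enters only implicitly through Theorem \ref{theorem_symmetry4} (guaranteeing the Lie derivative on codiffusors corresponds to an actual flow of semimartingale transformations), and so the statement above is, strictly speaking, a purely geometric identity of modules which holds for any smooth vector field $X$ whenever $L$ is standard.
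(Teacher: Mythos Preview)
Your proof is correct and follows essentially the same approach as the paper: both directions use the Leibniz identity from Theorem \ref{theorem_codiffusor5}(3) applied to $\langle \lambda, L\rangle = 0$, and the forward implication is completed by invoking Lemma \ref{lemma_symmetry3}. The only difference is the order in which you present the two implications, and your added remarks on where standardness and projectability actually enter are accurate commentary not present in the paper's proof.
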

\begin{proof}
Suppose that $\mathcal{L}_X(\Lambda_L) \subseteq \Lambda_L$. For any codiffusor  $\lambda \in \Lambda_L$, we have
\begin{eqnarray*}
0&=&\mathcal{L}_X(\langle \lambda, L\rangle)\\
&=&\langle \mathcal{L}_X(\lambda),L \rangle +\langle \lambda, \mathcal{L}_X(L)\rangle\\
&=&\langle \lambda, \mathcal{L}_X(L)\rangle.
\end{eqnarray*}
Hence, by Lemma \ref{lemma_symmetry3}, there exists  $\mu \in \cinf(N)$ such that $\mathcal{L}_X(L)=\mu L$.\\
Conversely, suppose that $\mathcal{L}_X(L)=\mu L$; then for any $\lambda \in \Lambda_L$,
\begin{eqnarray*}
0&=&\langle \mathcal{L}_X(\lambda), L\rangle+\langle \lambda, \mu L\rangle\\
&=&\langle \mathcal{L}_X(\lambda), L\rangle.
\end{eqnarray*}
Hence $\mathcal{L}_X(\lambda) \in \Lambda_L$, completing the proof.
${}\hfill$\end{proof}

In order to give a coordinate expression for  condition  \refeqn{equation_lambda2} we consider  a coordinate system $x^i$  on $M$ and  a standard  diffusor $L$  of the form
$$L=A^{ij}\partial_{x^ix^j}+b^i\partial_{x^i}+A^{it}\partial_{x^it}+\partial_t.$$
It is easy to prove that, if $L$ is a good diffusor, then  $A^{it}=0$ and  the matrix $A^{ij}$ is semidefinite positive. Hence
 $L$ has the form
\begin{equation}\label{equation_symmetry1}
L=A^{ij}\partial_{x^ix^j}+b^i\partial_{x^i}+\partial_t.
\end{equation}
Given a projectable vector field  $X=\phi^i\partial_{x^i}+\tau \partial_t$, we can calculate $\mathcal{L}_X(L)$ and, inserting this expression in \eqref{equation_lambda2},  we obtain $\mu=-\partial_t\tau$ and
\begin{eqnarray}
&\phi^k\partial_{x^k}A^{ij}+\tau\partial_tA^{ij}-A^{ik}\partial_{x^k}\phi^j-A^{kj}\partial_{x^k}\phi^i+A^{ij}\partial_t\tau=0&\label{equation_symmetry2}\\
&\phi^k\partial_{x^k}b^i+\tau \partial_tb^i-b^k\partial_{x^k}\phi^i-A^{jk}\partial_{x^jx^k}\phi^i+b^i\partial_t\tau-\partial_t\phi^i=0,&\label{equation_symmetry3}
\end{eqnarray}
for $i,j=1,...,\dim(M)$.

\bigskip

In the following we compare the symmetry approach proposed in this paper, and in particular the \emph{ determining equations} \refeqn{equation_symmetry2}
and \refeqn{equation_symmetry3}, with  other results on symmetries of stochastic processes appearing  in the literature.\\
Given a diffusor $L$, it is natural to consider the corresponding Kolmogorov equation
\begin{equation}\label{equation_Kolmogorov}
L(u)=A^{ij}\partial_{x^ix^j}(u)+b^i\partial_{x^i}(u)+\partial_t(u)=0
\end{equation}
describing the behaviour of the mean value of regular functions of the solution process $X_t$. More precisely, a solution $u(x,t)$ to
equation \refeqn{equation_Kolmogorov} is of the form $\mathbb{E}[f(X_T)|X_t=x]=u(x,t)$, with $t\in [0,T]$. Since
\refeqn{equation_Kolmogorov} is a PDE, its Lie symmetries can be interpreted as vector fields on $J^0(N,\mathbb{R})$ of the form
$$
Z=\tau(x,t,u)\partial_{t}+\phi^i(x,t,u)\partial_{x^i}+\psi(x,t,u)\partial_u.
$$
satisfying (in the non-degenerate case, i.e. when $A^{ij}$ has maximal rank) the following conditions (see e.g. \cite{Gaeta1999,Olver1993})
\begin{eqnarray*}
&\psi(x,t,u)=h(x,t)u&\\
&\partial_u(\phi^i)=0&\\
&\partial_u(\tau)=0&\\
&\partial_{x^i}(\tau)=0&\\
&\partial_t(h)+A^{ij}\partial_{x^ix^j}(h)+b^i\partial_{x^i}(h)=0&\\
&\phi^k\partial_{x^k}A^{ij}+\tau\partial_tA^{ij}-A^{ik}\partial_{x^k}\phi^j-A^{kj}\partial_{x^k}\phi^i+A^{ij}\partial_t\tau=0&\\
&\phi^k\partial_{x^k}b^i+\tau \partial_tb^i-b^k\partial_{x^k}\phi^i-A^{jk}\partial_{x^jx^k}\phi^i+b^i\partial_t\tau-\partial_t\phi^i+A^{ik}\partial_{x^k}(h)+A^{ki}\partial_{x^k}(h)=0.&
\end{eqnarray*}

It is interesting to note  that  these equations coincide with equations \refeqn{equation_symmetry2} and \refeqn{equation_symmetry3} when $h$
is constant. This is due to the fact that, in our approach, the main object is the process $X_t$ and a symmetry $Y$ on $M$
 transforms the solution $X_t$ to the martingale problem into a (possibly different) solution $\Phi_f(X)_t$ to the same martingale problem. Hence $Y=Z$ (under the hypothesis $h=0$) transforms  solutions to \refeqn{equation_Kolmogorov} into  other solutions to \refeqn{equation_Kolmogorov}.\\
Indeed a solution $u$ to the Kolmogorov equation such that $u(x,T)=g(x)$ is of the form $u(x,t)=\mathbb{E}[g(X_T)|X_t=x]$. This means that
\begin{eqnarray*}
u(\Phi_a^{-1}(x,t),f_a^{-1}(t))&=&\mathbb{E}[g(X_T)|X_{f_a^{-1}(t)}=\Phi_a^{-1}(x)]\\
&=&\mathbb{E}[g(X_T)|\Phi_a(H_{f_a}(\overline{X}))=x]\\
&=&\mathbb{E}[g \circ \Phi_a^{-1}(\Phi_{a,f_a}(X)_{f_a^{-1}(T)},f_a^{-1}(T)) \, |\Phi_{a,f_a}(X)_t=x]=v(x,t).
\end{eqnarray*}
Since $\Phi_f(X)$ is still a solution to the same  martingale problem, we have that $v$ is the unique solution to $L(v)=0$ with final condition
$v(x,f_a^{-1}(T))=g(\Phi_a^{-1}(x,f_a^{-1}(T)))$. \\
The fact that only the transformations with $h=0$ turn out to be symmetries of both the diffusion process and the Kolmogorov equation follows from the fact that
the transformations of the function $u$ do not  have a natural meaning when the focus is on the process.  \\

Another  natural comparison arising in this framework is the study of the relationship  between  the symmetries of a martingale problem as proposed in the present paper and  the symmetries of the corresponding  SDE as given in \cite{DMU1}. Since in \cite{DMU1} we consider only autonomous SDEs and stochastic  time changes, in order to make the two approaches correctly comparable, we restrict our considerations to autonomous diffusions (i.e. $A^{ij},b^i$  not depending on $t$) and  time changes of the form  $\tau=a t$ for some $a \in \mathbb{R}$.\\
Given  $\mu:M \rightarrow \mathbb{R}^n$ and $\sigma:M \rightarrow Mat(n,m)$,  in \cite{DMU1} we consider  SDEs of the form
$$dX_t=\mu^i(X_t) dt +\sigma^i_{\alpha}(X_t) dW^{\alpha}$$
where  $(\mu,\sigma)=(\mu^i(x),\sigma^i_{\alpha}(x))$,  and  $m$ is the dimension of the Brownian motion driving the SDE. The relationship between $(\mu,\sigma)$ and $(b^i,A^{ij})$ is provided  by It\^o formula which  ensures that
\begin{equation}\label{equation_coeffSDE}
b^i=\mu^i,  \ \ A^{ij}=\frac{1}{2}\sum_{\alpha=1}^m \sigma^i_{\alpha}\sigma^j_{\alpha}.
\end{equation}
The infinitesimal stochastic transformation of a  process $X$  and of a Brownian motion $W$ is given by a triple $(\tilde{Y},C,a)$
where $\tilde{Y}=\phi^i\partial_{x^i}$ is a vector field on $M$,  describing the spatial change of $X$, $C:M \rightarrow \mathfrak{so}(m)$ is
a function representing  the random rotation of the Brownian motion $W$ and taking values in the group of antisymmetric matrices,  and $a \in \mathbb{R}$ is the parameter of the time rescaling.\\
The determining equations for $(\tilde{Y},C,a)$ are
\begin{eqnarray}
&\phi^k\partial_{x^k}\mu^i-\mu^k\partial_{x^k}\phi^i-\frac{1}{2}\sum_{\alpha}\sigma_{\alpha}^{j}\sigma_{\alpha}^k\partial_{x^jx^k}\phi^i+a \mu^i=0,&\label{equation_determining1}\\
&\phi^k\partial_{x^k}(\sigma^i_{\alpha})-\sigma_{\alpha}^k\partial_{x^k}(\phi^i)+C^{\beta}_{\alpha}\sigma^i_{\alpha}+\frac{1}{2}a \sigma^i_{\alpha}=0. &
\label{equation_determining2}
\end{eqnarray}
It is easy to check, by using \refeqn{equation_coeffSDE},  that equation \refeqn{equation_determining1} coincides with equation \refeqn{equation_symmetry3}
with $Y=\tilde{Y}+at\partial_t$ and that, being $C$ antisymmetric,  equation \refeqn{equation_determining2} implies equation \refeqn{equation_symmetry1}.
Furthermore, it is possible to prove that, if $A^{ij}$ has constant rank, there exists a unique antisymmetric matrix $C(x)$ such that, if $Y=\tilde{Y}+at\partial_t$
solves equation \refeqn{equation_symmetry2},  then $(\tilde{Y},C,a)$ solves equation \refeqn{equation_determining2}.
Therefore, providing $A$ is non-degenerate, the symmetries of  a SDE with deterministic time change defined in \cite{DMU1} are in one-to-one correspondence
 with the symmetries of the related martingale problem  introduced here.
The presence of the matrix $C \not =0 $ is essential for the validity of this correspondence. Indeed, since in the martingale problem formulation the Brownian motion
is not  fixed, freezing   the Brownian motion  in the SDE formulation  by choosing $C = 0$ may cause the loss of  some
Lie symmetries (see \cite{Gaeta2000} and \cite{DMU1} for further details).

\section*{Acknowledgements}

The authors would like to thank Prof. Gaeta for his  useful comments and suggestions  in the first part of the work. This work was supported by Gruppo Nazionale Fisica Matematica (GNFM-INdAM).

\bibliographystyle{plain}

\bibliography{SymmDiff(2)}

\end{document}